\newtheorem*{theorem*}{Main Theorem}
\newtheorem*{lemma*}{Lemma A}
\newtheorem{theorem}{Theorem}[section]
\newtheorem{lemma}[theorem]{Lemma}
\theoremstyle{definition}
\newtheorem{definition}{Definition}
\theoremstyle{remark}
\newtheorem{remark}[theorem]{Remark}
\numberwithin{equation}{section}
\newcommand{\lan}{\langle}
\newcommand{\ran}{\rangle}
\newcommand{\sH}{\mathscr{H}}
\newcommand{\sR}{\mathscr{R}}
\newcommand{\Z}{\mathbb{Z}}
\newcommand{\ch}{\mathrm{ch}}
\newcommand{\Ch}{\mathrm{Ch}}
\newcommand{\fm}{\mathfrak{m}}
\newcommand{\MF}{\mathrm{MF}}
\newcommand{\id}{\operatorname{id}}
\newcommand{\cA}{{\mathcal{A}}}
\def\on{\operatorname}
\newcommand{\sK} {\mathscr{K}}
\def\Gres{\on{Res}^G}
\def\GGres{\on{Res}_{f}}
\def\res{\on{res}}
\newcommand{\xra}[1]{\xrightarrow{#1}}
\newcommand{\xla}[1]{\xleftarrow{#1}}
\newcommand{\ot }{\otimes}
\newcommand{\ccan}{{\mathsf{can}}}
\newcommand{\C}{{\mathbb{C}}}
\newcommand{\rH}{{\mathrm{H}}}
\newcommand{\cH}{{\mathsf{H}}}
\newcommand{\uu}{{(\!(u)\!)}}
\newcommand{\uuu}{{[\![u]\!]}}
\def\bu{\bullet}
\newcommand{\cC}{{\mathcal{C}}}
\def\on{\operatorname}
\newcommand{\Hom }{{\mathrm{Hom}}}
\newcommand{\op}{{\mathrm{op}}}
\newcommand{\Perf}{\mathrm{Perf}}
\newcommand{\Kw}{\sK_{f}}
\newcommand{\Cw}{\sH_{f}}
\newcommand{\Cwn}{\sH_{F}}
\newcommand{\CCw}{\sR_{f}}
\newcommand{\CCwn}{\sR_{F}}
\newcommand{\bbco}{{\nabla}_{{\!\!{\partial_u}}}^{f}}
\newcommand{\bbcoo}{{\nabla}_{{\!\!{\partial_{z_i}}}}^{F}}
\newcommand{\bbcon}{{\nabla}_{{\!\!{\partial_u}}}^{F}}
\newcommand{\bde}{{\boldsymbol \delta}}
 \newcommand{\bfx}{\mathbf{x}}
\newcommand{\bfy}{\mathbf{y}}
\newcommand{\HH}{{\mathbb H}}
\begin{document}

\title[Higher residues and canonical pairing]
      {Higher residues and canonical pairing on the twisted de Rham cohomology}  

\author[H. Kim]{Hoil Kim}
\address{Department of Mathematics\\
  Kyungpook National University\\
  Taegu 702-701, Republic of Korea}
\email{hikim@knu.ac.kr}

\author[T. Kim]{Taejung Kim }
\address{Department of Mathematics Education\\
  Korea National University of Education\\
250 Taeseongtabyeon-ro, Gangnae-myeon\\
Heungdeok-gu, Cheongju-si, Chungbuk 28173\\
 Republic of Korea}
\email{tjkim@kias.re.kr}

\thanks{H. Kim was supported by NRF-2018R1D1A1B07044575 and T. Kim was supported by NRF-2018R1D1A3B07043346.}

\begin{abstract}

        We describe an explicit formula of the canonical pairing on the twisted de Rham cohomology associated with the category of local matrix factorizations and by characterizing its relation to Saito's higher residue pairings we reprove the conjecture of Shklyarov.
\end{abstract}

\subjclass[2020]{Primary 14A22; Secondary  16E40, 18G80}

\keywords{Matrix factorizations, local cohomology, twisted de Rham cohomology, canonical  pairing, Saito's higher residue  pairings.}
\maketitle

 \section{Introduction}
 Higher residue pairings introduced by K. Saito have played an important role in the development of the theory of Frobenius manifold \cite{her3, tsaito,tu1, tu2} and the concept of higher residue pairings starts to appear in a computation of intersection numbers of cohomology classes through the Feynman integrals with the graphs and super potentials in the recent trends of the scattering theory; see \cite{mp, van}. While their importance has kept growing in mathematics and physics, information about a definite formula describing them does not seem to be readily available in the literature. In this paper, we obtain their explicit formula and apply it to solve the problems related to the canonical pairing in the dg category of matrix factorizations. More specifically, we describe the composition of following maps explicitly:
\begin{equation}\label{3.2}
  \begin{aligned}
    \cH_{f}^{(0)}&\times \cH_{f}^{(0)}\xra{\id\times(-1)^n}\cH_{f}^{(0)}\times \cH_{-f}^{(0)}\\
    &\xra{\cong}\rH_n(\Omega^{\bullet}_{Q_\fm/\C}\uuu, -d{f} +ud)\times\rH_n(\Omega^{\bullet}_{Q_\fm/\C}\uuu, d{f} +ud)\\
  &\xla{\cong} \mathbb{H}_n R\Gamma_{\fm} (\Omega^\bu_{Q_\fm/\C}\uuu, -df+ud) \times \mathbb{H}_n R\Gamma_\fm(\Omega^{\bullet}_{Q_\fm/\C}\uuu, d{f} +ud)\\
  & \xra{\wedge\circ \mathsf{kun}}
  \mathbb{H}_{2n} \big(\cC(x_1,\dots,x_n) \otimes_{Q_\fm}  (\Omega_{Q_\fm/\C}^\bu\uuu, ud) \big) \\
  & \xra{\on{\res}} \C\uuu
  \end{aligned}
  \end{equation}
where $\mathsf{kun}$ is the K\"unneth map; for the descriptions of the rest of notations in the above, see Section~\ref{localcoho}. The explicit formula of the composition of maps in \eqref{3.2} is given in \eqref{hres1}. From this, in Theorem~\ref{cojecmain} we establish that \eqref{hres1} multiplied by $u^n$ is in fact the explicit formula of Saito's higher residue pairings. In \cite{shkl}, D. Shklyarov gives  a categorical interpretation of Saito's higher residue pairings with the ambiguity of sign. The determination of the sign is known as the conjecture of Shklyarov. We discuss about the relation between \eqref{hres1} and Shklyarov's conjecture in Section~\ref{sec422}.

\subsection{Outline of paper}

In Section~\ref{localcoho} after briefly explaining relevant facts about the local cohomology and Grothendieck's residue map, we describe the composition of maps in \eqref{3.2}. In Section~\ref{32hr}, we calculate its explicit formula \eqref{hres1} with \eqref{important2}. In Section~\ref{decan} we introduce the characteristic equation~\eqref{eqn: char pairing} of the canonical pairing, which one can take as a definition of the canonical pairing without looking over its typical definition found in \cite{Shk: HRR, shkl}. From the properties in Section~\ref{gropro}, the explicit formula~\eqref{hres1} and Equation~\eqref{eqn: char pairing} enable us to deduce that \eqref{hres1} is the canonical pairing in Section~\ref{33eq}. Section~\ref{sec4} deals with the equivalence between \eqref{hres1} and Saito's higher residue pairings; see Theorem~\ref{cojecmain}.  As a byproduct, we give another proof of the conjecture of Shklyarov in Theorem~\ref{saitomain} in Section~\ref{sec422}.

\section{Local cohomology and higher residue  pairings $\sH_f$}

  \subsection{Local cohomology}\label{localcoho}
  Let $Q=\mathbb{C}[x_1,\dots,x_n]$ and $\fm=(x_1,\dots,x_n)$. The $k$th local cohomology  associated with the maximal ideal $\fm$ of a complex
  $(\Omega_{Q/\C}^\bu\uuu, -df+ud)$ where $u$ is a formal variable of degree 2 is by definition
  the $k$th hypercohomology $\mathbb{H}_k R\Gamma_{\fm} (\Omega^\bu_{Q/\C}\uuu, -df+ud)$. That is, after taking an injective resolution $\mathcal{I}^{\bullet,\bullet}$ of a complex $(\Omega_{Q/\C}^\bu\uuu, -df+ud)$,
  one takes the $k$th homology of the total complex $\bigoplus_n\oplus_{p+q=n} \mathcal{I}^{p,q}$. Note that for a local cohomology of a $Q$-module $M$, we simply write  $\rH^{k}_{\fm}(M)$. On the other hand, it can be also calculated using the \v{C}ech complex as follows. Let
  $$\cC(x_1,\dots,x_n)=\bigoplus_{j=0}^{n}\cC^j\text{ with }\cC^j=\bigoplus_{i_1<\cdots<i_j}Q\big[\frac{1}{x_{i_1}\cdots x_{i_j}}\big]\alpha_{i_1}\cdots \alpha_{i_j}$$
  where $\alpha_{i}^{2}=0$, $\alpha_i\alpha_j=-\alpha_j\alpha_i$, and $|\alpha_i|=1$. Then $(\cC(x_1,\dots,x_n), \sum_{i=1}^{n}\alpha_i)$ is a complex and it turns out that the $k$th homology of the total complex
  $$(\cC(x_1,\dots,x_n)\otimes_Q\Omega_{Q/\C}^\bu\uuu, -df+ud+\sum_{i=1}^{n}\alpha_i)$$
is isomorphic to the $k$th hypercohomology $\mathbb{H}_k R\Gamma_{\fm} (\Omega^\bu_{Q/\C}\uuu, -df+ud)$; see \cite[Chapter 7]{iyen1} for details.

Let $f\in Q=\mathbb{C}[x_1,\dots,x_n]$ and assume that $\fm = (x_1, \dots, x_n)$ 
is the only critical point of the map $f: \mathbb{A}^n_\C \to \mathbb{A}^1_\C$. Let
\begin{equation}\label{tw} 
  \begin{aligned}
  \cH_{f}^{(0)}&:=\rH_n(\Omega^{\bullet}_{Q/\C}\uuu, -d{f} +ud)\\
  \cH_{-f}^{(0)}&:=\rH_n(\Omega^{\bullet}_{Q/\C}\uuu, d{f} +ud)
  \end{aligned}
\end{equation}
where $u$ is a formal variable of degree 2.  Since $f$ has an isolated singularity, it is known that
$$\cH_{f}^{(0)}\cong (\Omega^{\bullet}_{Q/\C}\uuu, -d{f} +ud)\cong \frac{\Omega^{n}_{Q/\C}}{(-df\wedge)\Omega^{n-1}_{Q/\C}}\uuu.$$
Moreover, letting $Q_\fm$ be the localization of $Q$ at $\fm$, under the condition $\mathrm{Sing}(f)=\fm$ we have
\begin{equation}\label{se4211}
  (\Omega^{\bullet}_{Q/\C}\uuu, -d{f} +ud)\cong (\Omega^{\bullet}_{Q_\fm/\C}\uuu, -d{f} +ud).
\end{equation}
Since the local cohomology functor $R\Gamma_m$ is the right adjoint of the canonical inclusion functor, we have
$$(\Omega^{\bullet}_{Q_\fm/\C}\uuu, -d{f} +ud)\cong R\Gamma_m (\Omega^{\bullet}_{Q_\fm/\C}\uuu, -d{f} +ud).$$

\begin{definition} \label{def825}
Grothendieck's residue map $\Gres: \rH^{n}_{\fm}(\Omega^n_{Q_\fm/\C}) \to \C$ is the unique  $\C$-linear map 
such that 
\begin{equation} \label{E527}
\Gres \left[  \frac{dx_1 \cdots dx_n} {x_1^{a_1}, \cdots, x_n^{a_n}} \right] =
\begin{cases} 
1 & \text{if $a_i = 1$ for all $i$, and } \\
0 & \text{otherwise.}
\end{cases}  
\end{equation}
In particular, we will denote $\Gres\left[  \frac{\psi_1\psi_2dx_1 \cdots dx_{n}} {f_1, \dots, f_{n}} \right]$ by
$\GGres(\psi_1dx_1\cdots dx_{n},\psi_2dx_1\cdots dx_{n})$ where $f_i:=\frac{\partial f}{\partial x_i}$. For the $\Z/2$-graded $Q_\fm$-module $\Omega^\bu_{Q_\fm/\C}$, it induces a map $\overline{\Gres}: \rH^{n}_{\fm}(\Omega^n_{Q_\fm/\C}/d\Omega^{n-1}_{Q_\fm/\C}) \to \C$; see \cite[Section 5]{kunz}. By abuse of notation, letting its $\C\uuu$-linear extension be $\Gres$ again, we define
$$
\res=: \on{res}_{Q,\fm}: \mathbb{H}_{2n} R\Gamma_\fm(\Omega^\bu_{Q_\fm/\C}\uuu,ud)  \to \C\uuu
$$
by the composition of
$$
\mathbb{H}_{2n} R\Gamma_\fm(\Omega^\bu_{Q_\fm/\C}\uuu,ud)  
\to \rH_\fm^n(\Omega^n_{Q_\fm/\C}/d\Omega^{n-1}_{Q_\fm/\C})\uuu \xra{\Gres} \C\uuu.
$$
\end{definition}

\subsection{Higher residue pairings}\label{32hr}

Let $f_j:=\frac{\partial f}{\partial x_j}$. We define for $j=1,\dots,n$
\begin{equation}\label{eq31}
  \Phi_{j,f}(h(x_1,\dots,x_n)):=\sum_{k=0}^{\infty}a_{k,j,f}(x_1,\dots,x_n)u^k
  \end{equation}
where $a_{0,j,f}:=-\frac{h}{f_j}$ and $a_{n,j,f}:=\frac{1}{f_j}\frac{\partial}{\partial x_j}a_{n-1,j,f}$. More explicitly, one has
\begin{equation}\label{maineq35}
  \Phi_{j,f}(h)=\sum_{k=0}^{\infty}(\frac{1}{f_j}\frac{\partial}{\partial x_j})^k(-\frac{h}{f_j})u^k.
\end{equation}
We extend $\Phi_{j,f}$ $\C\uuu$-linearly. Hence, it makes sense to say
$$\Phi_{j,f}\circ\Phi_{i,f}(h).$$
In particular, we have the followings.

\begin{lemma}\label{lemma2}
$$\Phi_{j,f}\circ\Phi_{i,f}(h)=\Phi_{i,f}\circ\Phi_{j,f}(h).$$
\end{lemma}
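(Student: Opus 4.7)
The plan is to identify $\Phi_{j,f}$ with $-1$ times the formal $u$-adic inverse of the first-order differential operator $B_j := f_j - u\,\partial/\partial x_j$, and then deduce the commutativity of $\Phi_{i,f}$ and $\Phi_{j,f}$ from the commutativity of $B_i$ and $B_j$ as operators.

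First, I would verify the identity $-B_j\,\Phi_{j,f}(h)=h$. Expanding
\begin{equation*}
  -\Bigl(f_j - u\tfrac{\partial}{\partial x_j}\Bigr)\sum_{k\ge 0}a_{k,j,f}\,u^k
  \;=\; -f_j\,a_{0,j,f} \;-\;\sum_{k\ge 1}\Bigl(f_j\,a_{k,j,f}-\tfrac{\partial}{\partial x_j}a_{k-1,j,f}\Bigr)u^k,
\end{equation*}
the defining relations $a_{0,j,f}=-h/f_j$ and $a_{k,j,f}=\tfrac{1}{f_j}\tfrac{\partial}{\partial x_j}a_{k-1,j,f}$ make every coefficient vanish except the constant term, which is $h$. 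Equivalently, one factors $B_j = f_j\cdot(1-uL_j)$ with $L_j := \tfrac{1}{f_j}\tfrac{\partial}{\partial x_j}$, so that the geometric series $(1-uL_j)^{-1} = \sum_{k\ge 0}u^k L_j^k$ followed by multiplication by $1/f_j$ produces a genuine two-sided inverse of $B_j$ in the $\C\uuu$-algebra $\End_{\C\uuu}\bigl(Q_\fm[1/(f_1\cdots f_n)]\uuu\bigr)$, and this inverse equals $-\Phi_{j,f}$.

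Next I would compute the commutator
\begin{equation*}
  [B_i,B_j] \;=\; -u\bigl[f_i,\tfrac{\partial}{\partial x_j}\bigr]-u\bigl[\tfrac{\partial}{\partial x_i},f_j\bigr] \;=\; u\,f_{ij}-u\,f_{ji} \;=\; 0,
\end{equation*}
using the elementary operator identities $[f_i,\partial/\partial x_j]=-f_{ij}$, $[\partial/\partial x_i,f_j]=f_{ji}$, and the symmetry of mixed partial derivatives $f_{ij}=f_{ji}$. Consequently $B_i^{-1}$ and $B_j^{-1}$ commute, which yields
\begin{equation*}
  \Phi_{i,f}\circ\Phi_{j,f} \;=\; B_i^{-1}B_j^{-1} \;=\; B_j^{-1}B_i^{-1} \;=\; \Phi_{j,f}\circ\Phi_{i,f},
\end{equation*}
and the lemma follows upon evaluation at $h$.

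The only delicate point is to make the inverses rigorous: since $f_j\in\fm$ is not a unit of $Q_\fm$, the operator $B_j$ is not invertible on $Q_\fm\uuu$ itself, and one must first pass to a localization such as $Q_\fm[1/(f_1\cdots f_n)]\uuu$ on which the $L_j$'s, and hence the geometric series inverses $(1-uL_j)^{-1}$, are well defined. I view this as a routine setup issue rather than a substantive obstacle; once the ambient algebra of operators is fixed, the argument reduces to the one-line observation that commuting operators have commuting inverses.
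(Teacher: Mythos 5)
Your proof is correct, and it takes a genuinely different --- and in fact more complete --- route than the paper's. The paper expands $\Phi_{j,f}\circ\Phi_{i,f}(h)$ as a double sum in $u$ and then simply ``observes'' that
\[
\sum_{l+m=k}\Bigl(\tfrac{1}{f_j}\tfrac{\partial}{\partial x_j}\Bigr)^{l}\Bigl(\tfrac{1}{f_j}\bigl(\tfrac{1}{f_i}\tfrac{\partial}{\partial x_i}\bigr)^{m}\tfrac{h}{f_i}\Bigr)
=\sum_{l+m=k}\Bigl(\tfrac{1}{f_i}\tfrac{\partial}{\partial x_i}\Bigr)^{l}\Bigl(\tfrac{1}{f_i}\bigl(\tfrac{1}{f_j}\tfrac{\partial}{\partial x_j}\bigr)^{m}\tfrac{h}{f_j}\Bigr),
\]
which is precisely the coefficient of $u^k$ in the identity to be proved; no justification is given there, and the identity is not termwise (already for $k=1$ the two sides agree only after a cancellation that uses $\partial_i\partial_j f=\partial_j\partial_i f$). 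Your argument supplies exactly the missing content: you identify $-\Phi_{j,f}$ with the inverse of $B_j=f_j-u\,\partial/\partial x_j$ acting on $Q_\fm[1/(f_1\cdots f_n)]\uuu$ (your first display is essentially the paper's Lemma~\ref{lemma3} with the $dx_i$ stripped off), and you reduce the commutativity of the inverses to $[B_i,B_j]=0$, i.e.\ to the symmetry of mixed partials --- which is where the real content of the lemma lives. The localization caveat you raise is genuine and you handle it correctly. One small wording slip: the inverse of $B_j=f_j\cdot(1-uL_j)$ is $(1-uL_j)^{-1}$ \emph{preceded} by multiplication by $1/f_j$, namely $\sum_{k\ge0}u^kL_j^k\circ\tfrac{1}{f_j}$, not followed by it; since your displayed verification of $-B_j\Phi_{j,f}(h)=h$ uses the correct order and uniqueness of inverses settles the rest, nothing in the argument is affected.
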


\begin{proof}
From \eqref{maineq35}, we see that
  \begin{equation}
    \begin{aligned}
      \Phi_{j,f}\circ\Phi_{i,f}(h)&=\sum_{l=0}^{\infty}(\frac{1}{f_j}\frac{\partial}{\partial x_j})^l(-\frac{\Phi_{i,f}(h)}{f_j})u^l\\
      &=\sum_{k=0}^{\infty}\sum_{l+m=k}(\frac{1}{f_j}\frac{\partial}{\partial x_j})^l\Big( \frac{1}{f_{j}}(\frac{1}{f_i}\frac{\partial}{\partial x_i})^m
      (\frac{h}{f_i})\Big)u^{k}.
      \end{aligned}
  \end{equation}
  Observing
  $$\sum_{l+m=k}(\frac{1}{f_j}\frac{\partial}{\partial x_j})^l\Big( \frac{1}{f_{j}}(\frac{1}{f_i}\frac{\partial}{\partial x_i})^m
      (\frac{h}{f_i})\Big)=\sum_{l+m=k}(\frac{1}{f_i}\frac{\partial}{\partial x_i})^l\Big( \frac{1}{f_{i}}(\frac{1}{f_j}\frac{\partial}{\partial x_j})^m
  (\frac{h}{f_j}),$$
  we establish the proof.
  \end{proof}

\begin{lemma}\label{lemma3}
  We have 
  $$(-f_idx_i+udx_i\frac{\partial}{\partial x_i})\Phi_{i,f}(h)=hdx_i.$$
  In particular,
$$(-f_idx_i+udx_i\frac{\partial}{\partial x_i})\Phi_{i,f}(\Phi_{j,f}(h)))=\Phi_{j,f}(h)dx_i.$$
\end{lemma}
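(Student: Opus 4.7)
The plan is to verify the identity coefficient-by-coefficient in the formal variable $u$, exploiting the recursive definition of the coefficients $a_{k,i,f}$. Writing $\Phi_{i,f}(h) = \sum_{k\ge 0} a_{k,i,f}\, u^k$ as in \eqref{eq31}, I would apply the operator $D_i := -f_i\,dx_i + u\,dx_i\,\tfrac{\partial}{\partial x_i}$ termwise and collect powers of $u$. The result is that the coefficient of $u^0$ in $D_i\Phi_{i,f}(h)$ equals $-f_i\, a_{0,i,f}\, dx_i$, while for $k\ge 1$ the coefficient of $u^k$ equals $\bigl(-f_i\, a_{k,i,f} + \tfrac{\partial}{\partial x_i} a_{k-1,i,f}\bigr)\, dx_i$.

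Next I would plug in the defining data of $\Phi_{i,f}$. From $a_{0,i,f} = -h/f_i$, the $u^0$ coefficient is $-f_i\cdot(-h/f_i)\, dx_i = h\, dx_i$, which matches the right-hand side. From the recursion $a_{k,i,f} = \tfrac{1}{f_i}\tfrac{\partial}{\partial x_i} a_{k-1,i,f}$ (equivalently $f_i\, a_{k,i,f} = \tfrac{\partial}{\partial x_i} a_{k-1,i,f}$), the coefficient of $u^k$ for $k\ge 1$ vanishes identically. Summing over all $k$ yields the first identity $D_i\Phi_{i,f}(h) = h\, dx_i$.

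For the second identity, I would observe that the operator $D_i$ is $\C\uuu$-linear, since both $f_i$ (multiplication) and $\partial/\partial x_i$ commute with multiplication by $u$. Hence the first identity continues to hold when $h$ is allowed to lie in $Q_\fm\uuu$ and not merely in $Q_\fm$, because $\Phi_{i,f}$ has been extended $\C\uuu$-linearly in \eqref{eq31}. Applying the first identity with input $\Phi_{j,f}(h) \in Q_\fm\uuu$ in place of $h$ immediately yields $D_i\, \Phi_{i,f}(\Phi_{j,f}(h)) = \Phi_{j,f}(h)\, dx_i$.

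There is no real obstacle here: the statement is essentially a verification that $\Phi_{i,f}$ is designed to be a formal inverse to the $i$th component of the twisted differential $-df + ud$ restricted to the $dx_i$-direction. The only thing worth being careful about is the bookkeeping of indices when shifting the sum in the $u\,\partial/\partial x_i$ term, and the observation that the extension of $\Phi_{i,f}$ is defined precisely so that the argument iterates.
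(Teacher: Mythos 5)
Your proof is correct and is exactly the "easy computation" the paper leaves to the reader: the $u^0$ coefficient gives $-f_i a_{0,i,f}=h$ and the recursion $f_i a_{k,i,f}=\partial_{x_i}a_{k-1,i,f}$ kills all higher coefficients, with the second identity following from the $\C\uuu$-linear extension. No issues.
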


\begin{proof}
An easy computation establishes the proof.
  \end{proof}

\begin{lemma}\label{lem4}
  We have that
  $$h(x_1,\dots,x_n)dx_1\cdots dx_n\in \rH_n(\Omega^\bu_{Q_\fm/\C}\uuu, -df+ud)$$
  corresponds to $\omega_f(h)\in  \HH_n\big(\cC(x_1,\dots,x_n) \otimes_{Q_\fm}  (\Omega_{Q_\fm/\C}^\bu\uuu, -df+ud)\big)$ where 
$$\scalebox{0.9}{$\begin{aligned}
  \omega_f(h)&:=h(x_1,\dots,x_n)dx_1\cdots dx_n\\
  &+\sum_{i=1}^{n}(-1)^{i+1}\Phi_{i,f}(h)\alpha_i dx_1\wedge\cdots\wedge \widehat{dx_i}\wedge\cdots \wedge dx_n\\
  &+\sum_{j<i}^{n}(-1)^{i+j+3} \Phi_{j,f}\circ\Phi_{i,f}(h)\alpha_j\alpha_i dx_1\wedge\cdots\wedge \widehat{dx_j}\wedge\cdots\wedge \widehat{dx_i}\wedge\cdots \wedge dx_n\\
  &+\sum_{k<j<i}^{n}(-1)^{i+j+k+6}\Phi_{k,f}\circ\Phi_{j,f}\circ\Phi_{i,f}(h)\alpha_k\alpha_j\alpha_i dx_1\wedge\cdots\wedge \widehat{dx_k}\wedge\cdots\wedge \widehat{dx_j}\wedge\cdots\wedge \widehat{dx_i}\wedge\cdots \wedge dx_n\\
  &+\cdots+(-1)^{n(n+1)}\Phi_{1,f}\circ\cdots\circ \Phi_{n,f}(h)\alpha_1\cdots \alpha_n.
\end{aligned}$}$$
\end{lemma}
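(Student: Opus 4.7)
The plan is to verify directly that $\omega_f(h)$ is a cocycle in the total complex $(\cC(x_1,\dots,x_n)\otimes_{Q_\fm}\Omega^\bu_{Q_\fm/\C}\uuu, D)$ (with the Koszul sign convention on the total differential) whose Čech-degree-zero component equals $h\,dx_1\cdots dx_n$. Granted this, the claimed correspondence follows from the identification of $\HH_n R\Gamma_\fm$ with $\rH_n(\Omega^\bu_{Q_\fm/\C}\uuu,-df+ud)$ recalled in Section~\ref{localcoho}, under which cocycles in the total complex are matched with their $\cC^0$-components.

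Decompose $\omega_f(h)=\sum_{j=0}^n\omega_j$ with
\[
\omega_j=\sum_{|I|=j}(-1)^{|I|+T_j}\,\Phi_{I,f}(h)\,\alpha_I\,dx_{\hat I},
\]
where $I=\{i_1<\cdots<i_j\}$, $|I|:=\sum_k i_k$, $T_j:=j(j+1)/2$, $\Phi_{I,f}:=\Phi_{i_1,f}\circ\cdots\circ\Phi_{i_j,f}$ (well defined by Lemma~\ref{lemma2}), $\alpha_I:=\alpha_{i_1}\cdots\alpha_{i_j}$, and $dx_{\hat I}$ is the ordered wedge of $dx_k$ for $k\notin I$. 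The cocycle identity $D\omega_f(h)=0$ decomposes bidegree by bidegree into
\[
d_\cC\omega_{j-1}+(-1)^j(-df+ud)\omega_j=0\quad\text{in }\cC^j\otimes\Omega^{n-j+1}\uuu,\qquad 1\le j\le n.
\]

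To verify this for fixed $j$, $I$ with $|I|=j$, and $i_l\in I$, I extract the coefficient of $\alpha_I\otimes dx_{\widehat{I\setminus i_l}}$ on both sides. On the $(-df+ud)\omega_j$ side, only the summand with the wedge $dx_{i_l}\wedge dx_{\hat I}=(-1)^{i_l-l}dx_{\widehat{I\setminus i_l}}$ survives; pulling $\Phi_{i_l,f}$ outward using Lemma~\ref{lemma2} and then applying Lemma~\ref{lemma3} yields $(-f_{i_l}+u\partial_{i_l})\Phi_{I,f}(h)=\Phi_{I\setminus i_l,f}(h)$, so this contribution is $(-1)^{|I|+T_j+i_l-l}\Phi_{I\setminus i_l,f}(h)$. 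On the $d_\cC\omega_{j-1}$ side, only the term with $J=I\setminus i_l$ contributes, and $\alpha_{i_l}\alpha_J=(-1)^{l-1}\alpha_I$ produces $(-1)^{|I|-i_l+T_{j-1}+l-1}\Phi_{I\setminus i_l,f}(h)$. The bidegree equation then reduces to the numerical relation $(-1)^{j+T_j-T_{j-1}+2(i_l-l)+1}=-1$, which holds because $T_j-T_{j-1}=j$ makes the total exponent $2j+1+2(i_l-l)$ odd.

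The main obstacle is precisely this sign audit: one must simultaneously track the Koszul sign $(-1)^j$ from the total differential, the sign $(-1)^{|I|+T_j}$ built into $\omega_f(h)$, the permutation sign $(-1)^{i_l-l}$ from reinserting $dx_{i_l}$ into $dx_{\hat I}$, and the transposition sign $(-1)^{l-1}$ from inserting $\alpha_{i_l}$ into $\alpha_J$. Once these are aligned, the iterated use of Lemmas~\ref{lemma2} and~\ref{lemma3} closes the argument purely algebraically.
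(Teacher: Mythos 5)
Your proof is correct and takes essentially the same approach as the paper's: both reduce the lemma to showing that $\omega_f(h)$ is closed under the total differential $\sum_i\alpha_i - df\wedge + ud$, and both verify the required cancellation bidegree by bidegree between $(-df+ud)$ applied to the \v{C}ech-degree-$j$ component and the \v{C}ech differential applied to the degree-$(j-1)$ component, using Lemma~\ref{lemma2} to reorder the $\Phi$'s and Lemma~\ref{lemma3} for the key identity. Your sign audit is, if anything, more explicit than the paper's.
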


\begin{proof}
  It suffices to show that $\omega_f(h)$ is closed under $\sum_{i=1}^{n}\alpha_i-df\wedge+ud$.  A computation using Lemma~\ref{lemma2} and Lemma~\ref{lemma3} shows that
  $$\scalebox{0.8}
           {$\begin{aligned}
 (-df+ud)&\sum_{1\leq i_1<\cdots<i_k\leq n}(-1)^{\dagger} \Phi_{i_1,f}\circ\cdots\circ\Phi_{i_k,f}(h)\alpha_{i_1}\cdots\alpha_{i_k}
      dx_1\wedge\cdots\wedge \widehat{dx_{i_1}}\wedge\cdots\wedge \widehat{dx_{i_k}}\wedge\cdots \wedge dx_n\\
  =-(\sum_{i=1}^{n}\alpha_i)&\sum_{1\leq i_1<\cdots<i_{k-1}\leq n}(-1)^{\ddagger} \Phi_{i_1,f}\circ\cdots\circ\Phi_{i_{k-1},f}(h)\alpha_{i_1}\cdots\alpha_{i_{k-1}}
      dx_1\wedge\cdots\wedge \widehat{dx_{i_1}}\wedge\cdots\wedge \widehat{dx_{i_{k-1}}}\wedge\cdots \wedge dx_n
             \end{aligned}$}$$
  where $\dagger=i_1+\cdots+i_k+\frac{k(k+1)}{2}$ and $\ddagger=i_1+\cdots+i_{k-1}+\frac{(k-1)k}{2}$, which establishes the proof.
 \end{proof}

By taking all $\alpha_i=0$, Lemma~\ref{lem4} shows in \eqref{3.2} how to identify
  \begin{equation}\label{3.3}
    \rH_n(\Omega^\bu_{Q_\fm/\C}\uuu, -df+ud) \leftrightarrow \HH_n\big(\cC(x_1,\dots,x_n) \otimes_{Q_\fm}  (\Omega_{Q_\fm/\C}^\bu\uuu, -df+ud) \big).
\end{equation}
Moreover, from Lemma~\ref{lem4}, we see that the composition of maps in \eqref{3.2}
\begin{equation}\label{eq3.8}
 \cH_{f}^{(0)}\times \cH_{-f}^{(0)}\to \HH_{2n} \big(\cC(x_1,\dots,x_n) \otimes_{Q_\fm}  (\Omega_{Q_\fm/\C}^\bu\uuu, ud) \big) 
  \end{equation}
gives $\wedge\circ\mathsf{kun}(\omega_f(h),\omega_{-f}(g))$. For $i_1<\cdots<i_k$ and $j_1<\cdots<j_{n-k}$ such that $\{1,\dots,n\}/\{i_1,\dots,i_k\}=\{j_1,\dots,j_{n-k}\}$, we let
$$\begin{aligned}
  \alpha(\{i_1,\dots,i_k\})&:=\alpha_{i_1}\cdots\alpha_{i_k}\\
  \alpha(\{i_1,\dots,i_k\}^c)&:=\alpha_{j_1}\cdots\alpha_{j_{n-k}}.
\end{aligned}$$
We define $d\mathbf{x}(\{i_1,\dots,i_k\})$ and $d\mathbf{x}(\{i_1,\dots,i_k\}^c)$ in a similar way. Observe that
\begin{multline}\label{lem3935}
  \wedge\circ\mathsf{kun}(\omega_f(h),\omega_{-f}(g))\\
  =\frac{1}{2^n}\Big(
  h\cdot\Phi_{1,-f}\circ\cdots\circ \Phi_{n,-f}(g)dx_1\cdots dx_n\alpha_1\cdots\alpha_n\\
  +\Phi_{1,f}\circ\cdots\circ \Phi_{n,f}(h)\cdot g\alpha_1\cdots\alpha_ndx_1\cdots dx_n\\
  +\sum_{k=1}^{n-1}\sum_{i_1<\cdots<i_k}(-1)^{\ast}\Phi_{i_1,f}\circ\cdots\circ\Phi_{i_k,f}(h)\cdot\Phi_{j_1,-f}\circ\cdots\circ\Phi_{j_{n-k},-f}(g)\\
  \alpha(\{i_1,\dots,i_k\})d\mathbf{x}(\{i_1,\dots,i_k\}^c)\alpha(\{i_1,\dots,i_k\}^c)d\mathbf{x}(\{i_1,\dots,i_k\})\Big)\\
  =\frac{1}{2^n}\Big(
  (-1)^n h\cdot\Phi_{1,-f}\circ\cdots\circ \Phi_{n,-f}(g)+\Phi_{1,f}\circ\cdots\circ \Phi_{n,f}(h)\cdot g\\
  +\sum_{k=1}^{n-1}\sum_{i_1<\cdots<i_k}(-1)^{n-k}\Phi_{i_1,f}\circ\cdots\circ\Phi_{i_k,f}(h)\cdot\Phi_{j_1,-f}\circ\cdots\circ\Phi_{j_{n-k},-f}(g)
  \Big)\alpha_1\cdots\alpha_ndx_1\cdots dx_n
  \end{multline}
where $\ast=\frac{n(n+1)}{2}+\frac{k(k+1)}{2}+\frac{(n-k)(n-k+1)}{2}=k(n-k)$.

\begin{lemma}\label{lemm35com}
In $\HH_{2n} \big(\cC(x_1,\dots,x_n) \otimes_{Q_\fm}  (\Omega_{Q_\fm/\C}^\bu\uuu, ud) \big)$,  we have
  \begin{multline}
(-1)^{n-k}\Phi_{i_1,f}\circ\cdots\circ\Phi_{i_k,f}(h)\cdot\Phi_{j_1,-f}\circ\cdots\circ\Phi_{j_{n-k},-f}(g)\alpha_1\cdots\alpha_ndx_1\cdots dx_n\\
    =\textsf{boundary}+(-1)^nh\cdot\Phi_{1,-f}\circ\cdots\circ\Phi_{n,-f}(g)\alpha_1\cdots\alpha_ndx_1\cdots dx_n\\
     =\textsf{boundary}+\Phi_{1,f}\circ\cdots\circ\Phi_{n,f}(h)\cdot g\alpha_1\cdots\alpha_ndx_1\cdots dx_n.
  \end{multline}
  \end{lemma}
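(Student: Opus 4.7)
The plan is to establish, for any $i\in I:=\{i_1,\dots,i_k\}$, a single-step ``transfer'' identity
\[
\Phi_{i_1,f}\circ\cdots\circ\Phi_{i_k,f}(h)\cdot\Phi_{j_1,-f}\circ\cdots\circ\Phi_{j_{n-k},-f}(g) \equiv -\,\Phi_{I\setminus\{i\},f}(h)\cdot\Phi_{J\cup\{i\},-f}(g) \pmod{\textsf{boundary}}
\]
at the level of the top component $\alpha_1\cdots\alpha_n\,dx_1\cdots dx_n$ (writing $\Phi_{I,f}:=\Phi_{i_1,f}\circ\cdots\circ\Phi_{i_k,f}$ for brevity, which is well-defined by Lemma~\ref{lemma2}), and then to iterate the identity $k$ times on one side to reach the first claimed equality and $n-k$ times on the other side to reach the second.

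For a fixed $i\in I$, I would introduce the auxiliary $(2n{-}1)$-cochain
\[
\eta_i := \Phi_{I,f}(h)\,\Phi_{J\cup\{i\},-f}(g)\,\alpha_1\cdots\alpha_n\,dx_1\wedge\cdots\wedge\widehat{dx_i}\wedge\cdots\wedge dx_n.
\]
Because $\alpha_1\cdots\alpha_n\,\alpha_\ell=0$ for every $\ell$, the \v{C}ech part of the total differential annihilates $\eta_i$, and we are left with $d_{\mathrm{tot}}(\eta_i) = (-1)^{i-1}u\,\partial_i\big[\Phi_{I,f}(h)\,\Phi_{J\cup\{i\},-f}(g)\big]\,\alpha_1\cdots\alpha_n\,dx_1\cdots dx_n$. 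Pulling the factor $\Phi_{i,f}$ to the outside of $\Phi_{I,f}$ (and similarly $\Phi_{i,-f}$ to the outside of $\Phi_{J\cup\{i\},-f}$) via Lemma~\ref{lemma2}, Lemma~\ref{lemma3} then yields
\[
u\,\partial_i\,\Phi_{I,f}(h)=\Phi_{I\setminus\{i\},f}(h)+f_i\Phi_{I,f}(h),\qquad u\,\partial_i\,\Phi_{J\cup\{i\},-f}(g)=\Phi_{J,-f}(g)-f_i\Phi_{J\cup\{i\},-f}(g),
\]
so that after applying Leibniz the two $f_i$-terms cancel pairwise and produce the clean two-term relation $u\,\partial_i\big[\Phi_{I,f}(h)\,\Phi_{J\cup\{i\},-f}(g)\big] = \Phi_{I\setminus\{i\},f}(h)\,\Phi_{J\cup\{i\},-f}(g) + \Phi_{I,f}(h)\,\Phi_{J,-f}(g)$. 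This is exactly the single-step transfer identity modulo $d_{\mathrm{tot}}(\eta_i)$.

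Iterating the transfer $k$ times (moving each index of $I$ onto the $g$-side one at a time) gives $\Phi_{I,f}(h)\,\Phi_{J,-f}(g)\equiv (-1)^k h\cdot\Phi_{1,-f}\circ\cdots\circ\Phi_{n,-f}(g)$ modulo boundaries, and multiplying by the prefactor $(-1)^{n-k}$ from the statement recovers $(-1)^n h\cdot\Phi_{1,-f}\circ\cdots\circ\Phi_{n,-f}(g)$, giving the first equality. A symmetric choice $\eta'_j := \Phi_{I\cup\{j\},f}(h)\,\Phi_{J,-f}(g)\,\alpha_1\cdots\alpha_n\,dx_1\wedge\cdots\widehat{dx_j}\cdots dx_n$ for $j\in J$ transfers factors in the opposite direction, yielding after $n-k$ iterations $\Phi_{I,f}(h)\,\Phi_{J,-f}(g)\equiv (-1)^{n-k}\Phi_{1,f}\circ\cdots\circ\Phi_{n,f}(h)\cdot g$; the overall sign becomes $(-1)^{2(n-k)}=1$, which recovers the second equality. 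The real obstacle is the choice of the auxiliary cochain: placing $\Phi_i$ on the \emph{opposite} side of the $h/g$ partition from the one we wish to simplify is precisely what forces the spurious $f_i$-terms of Lemma~\ref{lemma3} to cancel in pairs, so that the boundary produces only the two desired terms; the natural-looking alternative (keeping $\Phi_i$ on the same side) leaves an unabsorbable $f_i$-multiplied residue and breaks the iteration.
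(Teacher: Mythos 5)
Your proof is correct and follows essentially the same route as the paper: the paper's own argument likewise writes down the $(2n-1)$-cochain in which the transferred index carries both a $\Phi_{\cdot,f}$ and a $\Phi_{\cdot,-f}$ factor, so that Lemma~\ref{lemma3} plus Leibniz makes the $f_i$-terms cancel and the boundary equals the sum of two adjacent terms, and then iterates (the paper presents the $J\to I$ transfer first and says the other direction is similar, whereas you present $I\to J$ first — a purely cosmetic difference).
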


\begin{proof}
  A computation using Lemma~\ref{lemma3} shows that
  \begin{multline*}
    \Phi_{i_1,f}\circ\cdots\circ\Phi_{i_k,f}(h)\cdot\Phi_{j_1,-f}\circ\cdots\circ\Phi_{j_{n-k},-f}(g)\alpha_1\cdots\alpha_ndx_1\cdots dx_n\\
    =-\Phi_{i_1,f}\circ\cdots\circ\Phi_{i_k,f}\circ\Phi_{j_1,f}(h)\cdot\Phi_{j_2,-f}\circ\cdots\circ\Phi_{j_{n-k},-f}(g)\alpha_1\cdots\alpha_ndx_1\cdots dx_n\\
    +(ud+\sum_{i=1}^{n}\alpha_i)\big(\Phi_{i_1,f}\circ\cdots\circ\Phi_{i_k,f}\circ\Phi_{j_1,f}(h)\cdot\Phi_{j_1,-f}\circ\Phi_{j_2,-f}\circ\cdots\circ\Phi_{j_{n-k},-f}(g)\big)\\
    \alpha_1\cdots\alpha_ndx_1\cdots\widehat{dx_{j_1}}\cdots dx_n.
  \end{multline*}
  It implies
\begin{multline*}
(-1)^{n-k}\Phi_{i_1,f}\circ\cdots\circ\Phi_{i_k,f}(h)\cdot\Phi_{j_1,-f}\circ\cdots\circ\Phi_{j_{n-k},-f}(g)\alpha_1\cdots\alpha_ndx_1\cdots dx_n\\
    =\textsf{boundary}+\Phi_{1,f}\circ\cdots\circ\Phi_{n,f}(h)\cdot g\alpha_1\cdots\alpha_ndx_1\cdots dx_n.
  \end{multline*}
  A similar observation also shows the other equality.
\end{proof}

From \eqref{lem3935} and Lemma~\ref{lemm35com}, \eqref{eq3.8} becomes
\begin{multline*}(hdx_1\cdots dx_n,gdx_1\cdots dx_n)\\
  \to\frac{1}{2}\big(\Phi_{1,f}\circ\cdots\circ \Phi_{n,f}(h)\cdot g+(-1)^nh\cdot\Phi_{1,-f}\circ\cdots\circ \Phi_{n,-f}(g)\big)\alpha_1\cdots\alpha_ndx_1\cdots dx_n.
\end{multline*}
For $h\in\mathbb{C}[x_1,\dots,x_n]$, letting
\begin{equation}\label{eq36}
  \Phi_{1,f}\circ\cdots\circ \Phi_{n,f}(h)=\sum_{i=0}^{\infty}b_{i,f}(h)u^i,
  \end{equation}
we may see that the composition of maps in \eqref{3.2} on $\cH_{f}^{(0)}\times \cH_{f}^{(0)}$ is given by
\begin{equation}\label{hres1}
    \begin{aligned}
  \sH_f(hdx_1\cdots dx_n,gdx_1\cdots dx_n)&:=\frac{1}{2}\sum_{i=0}^{\infty}\Big(\res((-1)^nb_{i,f}(h)g\alpha_1\cdots\alpha_ndx_1\cdots dx_n)\\
      &+\res(b_{i,-f}(g)h\alpha_1\cdots\alpha_ndx_1\cdots dx_n)\Big) u^{i}.
  \end{aligned}
  \end{equation}
For $\omega_1,\omega_2\in \cH_{f}^{(0)}$, we extend this pairing $\mathbb{C}\uuu$-sesquilinearly. Letting $d\mathbf{x}:=dx_1\cdots dx_n$ and 
\begin{equation}\label{impo2}
  \sH_f^{i}(hd\mathbf{x},gd\mathbf{x}):=\frac{1}{2}\Big(\Gres((-1)^nb_{i,f}(h)gd\mathbf{x})+\Gres(b_{i,-f}(g)hd\mathbf{x})\Big),
  \end{equation}
one can rewrite the composition of maps in \eqref{3.2}, called higher residue  pairings, as
\begin{equation}\label{hres2}
  \sH_f(hd\mathbf{x},gd\mathbf{x}):=\sum_{i\geq0}\sH^{i}_{f}(hd\mathbf{x},gd\mathbf{x})u^{i}.
  \end{equation}
We note that
\begin{equation}\label{impotant}
  \begin{aligned}
  b_{0,f}(h)&=(-1)^{n}\frac{h}{f_1\cdots f_n}\\
  b_{1,f}(h)&= (-1)^n\sum_{i=1}^{n}\frac{1}{f_1\cdots f_{i-1}}\cdot\frac{1}{f_i}\frac{\partial}{\partial x_i}(\frac{h}{f_i\cdots f_n})\\
  &=(-1)^n\sum_{i=1}^{n}\frac{h_i}{f_i(f_1\cdots f_n)}+(-1)^{n+1}\sum_{i=1}^{n}\sum_{j=i}^{n}\frac{hf_{ji}}{f_if_j(f_1\cdots f_n)}\\
 b_{2,f}(h)&= (-1)^n\sum_{i=1}^{n}\frac{1}{f_1\cdots f_{i-1}}\cdot(\frac{1}{f_i}\frac{\partial}{\partial x_i})^2(\frac{h}{f_i\cdots f_n})\\
 &+(-1)^n\sum_{i=1}^{n}\sum_{j=1}^{i-1}\frac{1}{f_1\cdots f_{i-j-1}}\cdot\frac{1}{f_{i-j}}\frac{\partial}{\partial x_{i-j}}
 \Big(\frac{1}{f_{i-j}\cdots f_{i-1}}\cdot\frac{1}{f_i}\frac{\partial}{\partial x_i}(\frac{h}{f_i\cdots f_n})\Big).
  \end{aligned}
\end{equation}
In general, the formula for $b_{k,f}(h)$ is given as follows:
\begin{multline}\label{important2}
  b_{k,f}(h)=(-1)^n\sum_{(a_1,\dots,a_n)}(\frac{\partial}{f_1\partial x_1})^{a_1}\frac{1}{f_1}\cdots(\frac{\partial}{f_k\partial x_k})^{a_k}\frac{1}{f_k}\cdots(\frac{\partial}{f_n\partial x_n})^{a_n}\frac{h}{f_n}
\end{multline} 
where the sum is over all $(a_1,\dots,a_n)$ such that $a_1+\cdots+a_n=k$ with non-negative integers $a_i$.

\section{Equivalence between the canonical  pairing and  $(-1)^{\frac{n(n+1)}{2}}\sH_f$.}\label{eqcanhres}

\subsection{Characteristic equation of the canonical  pairing}\label{defcan}

For a smooth and proper $\C$-linear dg category $\cA$, the canonical  pairing $\langle-,-\rangle_{HP(\cA)\otimes HP(\cA^\op)}$ on  $HP(\cA)\otimes HP(\cA^\op)$ can be seen as the inverse to the Chern character $\Ch_{HP}(\Delta_\cA)\in HP(\cA^\op\otimes\cA)$ where $\Delta_\cA\in \Perf(\cA^\op\otimes\cA)$ is the diagonal functor $\Delta_\cA:\cA\otimes\cA^\op\to\Perf(\C)$ given by $\Delta_\cA(E\otimes F^\vee)=\Hom_\cA(F,E)$. More specifically, we have
\begin{equation}\label{decan}
  \big(\langle-,-\rangle_{HP(\cA)\otimes HP(\cA^\op)}\otimes\id_{HP(\cA)}\big)\circ\big(\gamma\otimes\Ch_{HP}(\Delta_\cA)\big)=\gamma\text{ for all }\gamma\in HP_\ast(\cA);
  \end{equation}
see \cite[page 1875]{PV: HRR} and \cite[page 25]{Shk: HRR}. Via the K\"unneth isomorphism,  after writing 
 \[\Ch_{HP}(\Delta_\cA) = \sum_i T^i \ot T_i \text{ for some } T^i \in HP_\ast (\cA ^{op} )\text{ and } \ T_i \in HP_\ast (\cA ) , \]
 it is known that $\lan-,-\ran _{HP(\cA)\otimes HP(\cA^\op)}$ is a unique non-degenerate bilinear pairing $\lan-,-\ran$ satisfying 
\begin{equation}\label{eqn: char pairing}   
\sum _i \lan \gamma , T^i \ran \lan T_i, \gamma ' \ran = \lan \gamma, \gamma ' \ran  \text{ for every } \gamma \in HP_\ast(\cA )\text{ and }  \gamma '\in HP_\ast (\cA ^{op});
\end{equation}
see \cite[Proposition 4.2]{Shk: HRR} and \cite[Section 2.5]{Kim}. We call Equation~\eqref{eqn: char pairing} the characteristic equation of the canonical  pairing on $HP(\cA)\otimes HP(\cA^\op)$ and use Equation~\eqref{eqn: char pairing} to check whether a given explicit formula for a pairing is the canonical  pairing $\lan-,-\ran_{\ccan}$ on  $\cH_{f}^{(0)}\times \cH_{f}^{(0)}$ or not.

\subsection{The properties of Grothendieck's residue}\label{gropro} 

From \cite{HRD,kunz}, we recall two properties of Grothendieck's residue; Transitivity law and Transformation law: 
\begin{equation}\label{grore}
\begin{aligned}
   \Gres&\left[  \frac{d\mathbf{x}\wedge d\mathbf{y}}{f_1(\mathbf{x})^{m_1}, \cdots, f_n(\mathbf{x})^{m_n},f_1(\mathbf{y}),\cdots,f_n(\mathbf{y})} \right]\\
   &=\Gres\left[  \frac{ \Gres\left[  \frac{d\mathbf{x}}
         {f_1(\mathbf{x})^{m_1}, \cdots, f_n(\mathbf{x})^{m_n}} \right]\wedge d\mathbf{y}}
     {f_1(\mathbf{y}),\cdots,f_n(\mathbf{y})} \right]\\
  &=\Gres\left[  \frac{(-1)^{n+\flat}d(\mathbf{y}-\mathbf{x})\wedge d\mathbf{y}}
    {\big(f_1(\bfy)-f_1(\bfx)\big)^{m_1},\cdots,\big(f_n(\bfy)-f_n(\bfx)\big)^{m_n}, f_1(\mathbf{y}), \cdots, f_n(\mathbf{y})} \right]
\end{aligned}
\end{equation}
where  $\flat=m_1+\cdots+m_n$, $m_i\geq 1$, and $d\mathbf{x}=dx_1\wedge\cdots\wedge dx_n$.

\subsection{The equivalence}\label{33eq}
From \cite{KR}; also see \cite[Proposition 4.1.1]{PV: HRR}, we know
$$\ch_{HH}(\Delta_{Q_\fm})=(-1)^{\frac{n(n+1)}{2}}\cdot\det(\Delta_j(\partial_i f))d\mathbf{x}\ot d\mathbf{y}\in
\rH_{2n}(\Omega^{\bullet}_{Q_\fm\otimes Q_\fm},  -d\widetilde{f});$$
where  $\widetilde{f}=f(\mathbf{y})-f(\mathbf{x})$ and
$$\Delta_j f:=\frac{f(x_1,\ldots,x_{j-1},y_j,y_{j+1}\ldots,y_n)-f(x_1,\ldots,x_{j-1},x_j,y_{j+1},\ldots,y_n)}{y_j-x_j}.$$
We note that there is a formula for  $\ch_{HN}(\Delta_{Q_\fm})$ in \cite{BW} by using a connection $\nabla$. Since $Q_\fm$ is a local ring, we may take a Levi-Civita connection, i.e., $\nabla^2=0$; see \cite[Remark 5.11]{BW} for the definition. Thus, one still has
$$\ch_{HN}(\Delta_{Q_\fm})=(-1)^{\frac{n(n+1)}{2}}\cdot\det(\Delta_j(\partial_i f))
d\mathbf{x}\ot d\mathbf{y}\in\rH_{2n}(\Omega^{\bullet}_{Q_\fm\otimes Q_\fm}\uuu,  ud-d\widetilde{f});$$
see \cite{BW, CKK, TK} for details. Similar to \cite[Proposition 4.1.2]{PV: HRR}, we have the followings.

\begin{theorem}\label{main}
  Let $\bde(\mathbf{x},\mathbf{y}):=\det\big(\Delta_j(\partial_i f)\big)$ and
  $[\sum_{i=0}^{\infty}g_i(\mathbf{x})u^id\mathbf{x}]\in \cH_{f}^{(0)}$. Under the assumption that the only singularity of $f\in Q$ is $\fm$, on  $\cH_{f}^{(0)}\times \cH_{f}^{(0)}$ we have
 \begin{equation}\label{A-de-eq}
(\sH_f(\cdot,(-1)^n\cdot)\ot\id)\circ([\sum_{i=0}^{\infty}g_i(\mathbf{x})u^id\mathbf{x}]\ot[(-1)^n\bde(\mathbf{x},\mathbf{y})d\bfx \ot d\bfy])=[\sum_{i=0}^{\infty}g_i(\mathbf{y})u^id\mathbf{y}].
 \end{equation}
 That is, $(-1)^{\frac{n(n+1)}{2}}\sH_f(\cdot,\cdot)$ is the canonical pairing $\langle-,-\rangle_{\ccan}$ on $\cH_{f}^{(0)}\times \cH_{f}^{(0)}$.
\end{theorem}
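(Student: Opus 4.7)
The plan is to verify the characteristic equation \eqref{eqn: char pairing} for the candidate pairing $(-1)^{n(n+1)/2}\sH_f(\cdot,\cdot)$. Since $Q_\fm$ is local, the Chern character $\ch_{HN}(\Delta_{Q_\fm}) = (-1)^{n(n+1)/2}\bde(\mathbf{x},\mathbf{y})\,d\bfx\ot d\bfy$ recalled just before the theorem consists of a single K\"unneth summand, so the sum in \eqref{eqn: char pairing} collapses to a single equality. Cancelling the two instances of $(-1)^{n(n+1)/2}$ (one from the Chern character, one from the candidate pairing) and the two instances of $(-1)^n$ (implementing the isomorphism $\cH^{(0)}_{-f}\cong\cH^{(0)}_{f}$ that is built into \eqref{3.2} and that puts the $\bfx$-factor of $\ch_{HN}(\Delta_{Q_\fm})$ into $\cH^{(0)}_f$) leaves precisely the identity \eqref{A-de-eq} as the content to be verified.

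To establish \eqref{A-de-eq}, I first use $\C\uuu$-sesquilinearity to reduce to $\gamma=g(\bfx)\,d\bfx$ with $g\in Q_\fm$ free of $u$; the left-hand side then becomes the formal power series $\sum_{k\ge 0}\sH_f^k(g\,d\bfx,\bde\,d\bfx)\,u^k$. For the leading coefficient $k=0$, the first line of \eqref{impotant} together with the fact that the two summands of \eqref{impo2} coincide at this order yields
\[
\sH_f^0(g\,d\bfx,\bde\,d\bfx)\ =\ \Gres_{\bfx}\!\left[\frac{g(\bfx)\,\bde(\bfx,\bfy)\,d\bfx}{f_1(\bfx),\dots,f_n(\bfx)}\right].
\]
The telescoping identity $f_i(\bfy)-f_i(\bfx)=\sum_j\Delta_j(\partial_i f)(y_j-x_j)$ exhibits $\bde=\det(\Delta_j(\partial_i f))$ as the matrix intertwining the regular sequences $(x_j-y_j)_j$ and $(f_i(\bfx)-f_i(\bfy))_i$. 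Applying the transformation law \eqref{grore} (first to replace $f_i(\bfx)$ by $f_i(\bfx)-f_i(\bfy)$ in the denominator, then again to pass from $(f_i(\bfx)-f_i(\bfy))$ to $(x_j-y_j)$ via the Jacobian $\bde$) together with the defining formula \eqref{E527} evaluates this residue to $g(\bfy)$ modulo the Jacobian ideal of $f(\bfy)$, yielding the $u^0$-component of the right-hand side in $\cH_f^{(0)}(\bfy)$.

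The hard part is the vanishing of $\sH_f^k(g\,d\bfx,\bde\,d\bfx)\cdot d\bfy$ in $\cH_f^{(0)}(\bfy)$ for $k\ge 1$. Expanding $b_{k,f}$ and $b_{k,-f}$ via \eqref{important2} shows that the two summands of \eqref{impo2} differ in the signs carried by their iterated operators $\tfrac{\partial}{f_i\partial x_i}\cdot\tfrac{1}{f_i}$ versus $\tfrac{\partial}{-f_i\partial x_i}\cdot\tfrac{1}{-f_i}$. The plan is to move derivatives between the two factors of each summand by iterated use of Lemma~\ref{lemma3} and the transitivity law in \eqref{grore}, so that the $f$- and $(-f)$-summands become identical up to an overall sign; the factor $\tfrac12$ and the sign conventions of \eqref{impo2} then turn the combination into an exact term for $df(\bfy)+ud$ acting on a $(n{-}1)$-form, hence vanishing in $\cH_f^{(0)}(\bfy)$. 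The main obstacle is the combinatorial bookkeeping of these iterated exchanges and the signs across all orderings of indices; a cleaner alternative I would keep in reserve is to observe that both sides of \eqref{A-de-eq} are determined by their restriction at $u=0$ once one shows each is flat for the natural connection on $\cH_f^{(0)}\uuu$ in the $u$-direction, parallel to the viewpoint implicit in \cite[Proposition 4.1.2]{PV: HRR}.
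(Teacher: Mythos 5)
Your overall framework matches the paper's: reduce the canonical--pairing claim to the identity \eqref{A-de-eq} via the characteristic equation and the formula for $\ch_{HN}(\Delta_{Q_\fm})$, evaluate the $u^0$-coefficient by the transformation law (this is exactly \cite[Proposition 4.1.2]{PV: HRR}, which the paper simply cites), and then show that all coefficients of $u^k$ with $k\geq 1$ vanish modulo the Jacobian ideal in $\bfy$. The problem is that this last step --- which you yourself identify as ``the hard part'' --- is not actually carried out, and the mechanism you propose for it would not work. You suggest making the $f$- and $(-f)$-summands of \eqref{impo2} ``identical up to an overall sign'' so that the factor $\tfrac12$ turns the combination into an exact term. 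But Lemma~\ref{lemm35com} shows that in homology the two summands are \emph{equal} (that is the content of \eqref{newres1}: $\sH_f(hd\bfx,gd\bfx)=\sum_i\Gres((-1)^nb_{i,f}(h)g\,d\bfx)u^i$), so the symmetrization averages two copies of the same residue rather than cancelling them; no exactness can be extracted this way. The vanishing for $k\geq1$ is not a cancellation phenomenon between the two summands at all.

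The paper's actual argument for $k\ge 1$ is of a different nature and is the essential content of the proof: from \eqref{important2}, every term of $b_{k,f}(g)$ with $k\geq 1$ has the form $\Psi(g,f)/f_1^{m_1}\cdots f_n^{m_n}$ with all $m_j\geq 1$ and \emph{some} $m_j>1$. Feeding this into the iterated residue
\[
\Gres\left[\frac{h(\bfy)\,\Gres\left[(-1)^n b_{k,f}(g)\,\bde(\bfx,\bfy)\,d\bfx\right]d\bfy}{f_1(\bfy),\dots,f_n(\bfy)}\right]
\]
and applying the transitivity and transformation laws \eqref{grore} converts the inner residue into one supported at $\bfx=\bfy$ with denominator entries $(f_j(\bfy)-f_j(\bfx))^{m_j-1}$ and $y_j-x_j$; the surviving factor with $m_j-1\geq 1$ forces this residue to vanish. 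Your reserve alternative (both sides flat for a $u$-connection, hence determined by their value at $u=0$) is also only stated, not proved; the paper does establish a flatness statement (Lemma~\ref{lem55}), but only in Section~4 and for the distinct purpose of identifying $u^n\sH_f$ with Saito's pairing, and deducing Theorem~\ref{main} from it would require additional work you have not supplied. As it stands, the proposal leaves the decisive step of the theorem unproved.
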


\begin{proof}
From the non-degeneracy of the residue  pairing $\GGres$ and the sesquilinearity with respect to $u$, it suffices to prove that for all $h\in\mathbb{C}[\mathbf{y}]$, we have
  \begin{multline}
    \sum_{i=0}^{\infty}\GGres\big(h(\mathbf{y})d\bfy,(\sH_f(\cdot,(-1)^n\cdot)\ot\id)\circ(g_i(\mathbf{x})d\mathbf{x}\ot(-1)^n\bde(\mathbf{x},\mathbf{y})d\bfx\otimes d\bfy)\big) u^i\\
    =\sum_{i=0}^{\infty}\GGres\big(h(\mathbf{y})d\mathbf{y},g_i(\mathbf{y})d\mathbf{y}\big)u^i.
    \end{multline}
  That is, we need to prove that for all $h\in\mathbb{C}[\mathbf{y}]$
  \begin{equation}\label{hler}
    \GGres\Big(h(\mathbf{y})d\mathbf{y},\big(\sH_f(\cdot,(-1)^n\cdot)\ot\id)\circ(g(\mathbf{x})d\mathbf{x}\ot(-1)^n\bde(\mathbf{x},\mathbf{y})d\bfx\ot d\bfy\big)\Big)=\GGres\big(h(\mathbf{y})d\mathbf{y},g(\mathbf{y})d\mathbf{y}\big).
\end{equation}
From Lemma~\ref{lemm35com}, we see that on $\cH_{f}^{(0)}\times \cH_{f}^{(0)}$ 
\begin{equation}\label{newres1}
   \sH_f(hdx_1\cdots dx_n,gdx_1\cdots dx_n)=\sum_{i=0}^{\infty}\Gres\big((-1)^nb_{i,f}(h)gdx_1\cdots dx_n\big)u^{i}.
  \end{equation}
Using Formula~\eqref{newres1} we see that the left side of Equation~\eqref{hler} becomes
 $$\sum_{i=0}^{\infty}\Gres \left[  \frac{h(\bfy)  \Gres \left[ (-1)^nb_{i,f}(g(\bfx))\bde(\bfx,\bfy)d\bfx\right]u^{i}d\bfy    }{f_1(\bfy),\cdots, f_n(\bfy)} \right].$$
 For $i=0$, from \cite[Proposition 4.1.2]{PV: HRR} we already know that 
$$\Gres \left[  \frac{h(\bfy)  \Gres \left[(-1)^n b_{0,f}(g(\bfx))\bde(\bfx,\bfy)d\bfx\right]d\bfy    }{f_1(\bfy),\cdots, f_n(\bfy)} \right]
  =\Gres \left[  \frac{h(\bfy)g(\bfy)d\bfy} {f_1(\bfy),\cdots, f_n(\bfy)} \right].$$
Thus, we need to show that for $i\geq1$,
\begin{equation}\label{mmmm}
  \Gres \left[  \frac{h(\bfy)  \Gres \left[(-1)^nb_{i,f}(g(\bfx))\bde(\bfx,\bfy)d\bfx\right]d\bfy    }
    {f_1(\bfy),\cdots, f_n(\bfy)} \right]=0.
  \end{equation}
Now observe that from Formula~\eqref{important2}, for $g\in \C[\bfx]$ we see that $b_{i,f}(g)$ with $i\geq 1$ is the summation of the followings
$$\frac{\Psi(g,f)(\bfx)}{f_1^{m_1}\cdots f_n^{m_n}}$$
where $\Psi$ consists of the derivatives of $f$ and $g$ and the exponents in the denominator are bigger than or equal to $1$ and in particular at least some $m_j>1$.  Using the transitivity law and the transformation law of Grothendieck's residue in \eqref{grore}, we see the followings:
\begin{multline*}
\Gres \left[  \frac{h(\bfy)  \Gres \left[ (-1)^nb_{i,f}(g(\bfx))\bde(\bfx,\bfy)d\bfx\right]d\bfy    }
  {f_1(\bfy),\cdots, f_n(\bfy)} \right]\\
=\Gres \left[  \frac{(-1)^{\flat}h(\bfy)  \Psi(g,f)(\bfx)\bde(\bfx,\bfy)d(\bfy-\bfx) d\bfy    }
  {(f_1(\bfy)-f_1(\bfx))^{m_1},\cdots, (f_n(\bfy)-f_n(\bfx))^{m_n},f_1(\bfy),\cdots, f_n(\bfy)} \right]\\
=\Gres \left[  \frac{(-1)^{\flat}h(\bfy)  \Gres_{\bfx=\bfy} \left[ \frac{\Psi(g,f)(\bfx)\bde(\bfx,\bfy)d(\bfy-\bfx)}{(f_1(\bfy)-f_1(\bfx))^{m_1},\cdots, (f_n(\bfy)-f_n(\bfx))^{m_n}} \right]d\bfy    }
  {f_1(\bfy),\cdots,f_n(\bfy)} \right]\\
=\Gres \left[  \frac{(-1)^{\flat}h(\bfy)  \Gres_{\bfx=\bfy} \left[ \frac{\Psi(g,f)(\bfx)d(\bfy-\bfx)}{(f_1(\bfy)-f_1(\bfx))^{m_1-1},\cdots, (f_n(\bfy)-f_n(\bfx))^{m_n-1},y_1-x_1,\cdots,y_n-x_n} \right]d\bfy    }
  {f_1(\bfy),\cdots, f_n(\bfy)} \right]
\end{multline*}
where $\flat=m_1+\cdots+m_n$. Since  at least some $m_j>1$, we see that
$$\Gres_{\bfx=\bfy} \left[ \frac{\Psi(g,f)(\bfx)d(\bfy-\bfx)}{(f_1(\bfy)-f_1(\bfx))^{m_1-1},\cdots, (f_n(\bfy)-f_n(\bfx))^{m_n-1},y_1-x_1,\cdots,y_n-x_n} \right]=0,$$
which establishes \eqref{mmmm}. Hence,
$(-1)^{\frac{n(n-1)}{2}}\sH_f(\cdot,(-1)^n\cdot)$ is the canonical pairing on $\cH_{f}^{(0)}\times \cH_{f}^{(0)}$.
\end{proof}

\begin{remark}
In \cite{HK1}, we consider the composition of the global counterparts of maps in \eqref{3.2} and establish that it is  a canonical pairing.
\end{remark}

\section{Saito's higher residue  pairings $\Kw$ and $u^n\sH_f$.}

\subsection{Equivalence}\label{sec4}
Recall that Saito's higher residue  pairings $\sK_f:\rH_n(\Omega^{\bullet}_{Q/\C}\uu, -d{f} +ud)\times\rH_n(\Omega^{\bullet}_{Q/\C}\uu, -d{f} +ud)\to \mathbb{C}\uu$ satisfy
\begin{enumerate}[label=\arabic*)]
\item $\mathbb{C}\uu$-sesquilinear, i.e., $\sK_f(u\omega_1,\omega_2)=\sK_f(\omega_1,-u\omega_2)=u\sK_f(\omega_1,\omega_2)$
\item
  $\sK_f(\omega_1,\omega_2)=(-1)^n\sK_f(\omega_2,\omega_1)^\star$ where $\lambda(u)^\star:=\lambda(-u)\in\C\uu$
\item  $\partial_u\sK_f(\omega_1,\omega_2)=\sK_f(\nabla_{\partial_u}^{f}\omega_1,\omega_2)-\sK_f(\omega_1,\nabla_{\partial_u}^{f}\omega_2)$ with $\nabla_{\partial_u}^{f}=\partial_u+\frac{f}{u^2}$
\item the restriction of $\sK_f$ to $\cH_{f}^{(0)}$ takes a value in $\mathbb{C}\uuu u^{n}$ 
\item for $\omega_i=\psi_idx_1 \cdots dx_{n}+\sum_{k=1}^{\infty}\omega_{i,k}u^k$ with $i=1,2$,
$$\sK_f(\omega_1,\omega_2)=u^n\Gres\left[  \frac{\psi_1\psi_2dx_1 \cdots dx_{n}} {f_1, \dots, f_{n}} \right]+O(u^{n+1});$$
\end{enumerate}
see \cite{her3,He3,shkl} for details. On $\cH_f^{(0)}$, Saito's higher residue  pairings $\sK_f$ can be written as
$$\Kw(\omega_1,\omega_2):=\sum_{i=0}^{\infty}K_{f}^{i}(\omega_1,\omega_2)u^{n+i}\text{ where }K_{f}^{i}(\omega_1,\omega_2)\in \mathbb{C}.$$
The characterization of higher residue  pairings on $\cH_f^{(0)}$ is that they are unique if they satisfy  $1)$ to $5)$ and the flatness conditions in Lemma~\ref{imlem55} and Lemma~\ref{lem57}. We remark that without checking $5)$ the uniqueness holds up to a constant; see \cite[Theorem 5.1]{tsaito}, \cite[Appendix]{msai2}, and \cite{shkl} for details. Indeed, Shklyarov's conjecture is about the determination of the ambiguity of the constant with regard to checking $5)$ in the process of the comparison of the two higher residue pairings in \cite{shkl}. The explicit formula  \eqref{hres1} facilitates to check $5)$ and the other properties directly, which leads to the resolution of the indeterminacy in the comparison; see Theorem~\ref{saitomain}. We also note that this strategy is the generalization of the work about the canonical pairing on the Hochschild homology of the category of matrix factorizations by A. Polishchuk and A. Vaintrob \cite{PV: HRR}; see \cite[Section 1.2]{shkl} for details and compare with \cite{BW3,Kim,HK1}. 

By construction it is clear that 
\begin{itemize}
\item
$\Cw$ is  $\mathbb{C}\uu$-sesquilinear
\item
$\Cw^{0}(\omega_1,\omega_2)=\GGres(\omega_1,\omega_2)$. 
\end{itemize}
Since $b_{i,-f}(g)=(-1)^{n+i}b_{i,f}(g)$ from \eqref{important2}, Equation~\eqref{impo2} implies that $\Cw^{i}$ is $(-1)^i$-symmetric., i.e.,
$\Cw^{i}(hd\mathbf{x},gd\mathbf{x})=(-1)^i\Cw^{i}(gd\mathbf{x},hd\mathbf{x})$, which implies that
$$\sum_{i=0}^{n}\Cw^{i}(hd\mathbf{x},gd\mathbf{x})u^i=(\sum_{i=0}^{n}\Cw^{i}(gd\mathbf{x},hd\mathbf{x})u^i)^\star.$$
From this and the $\C\uu$-sesquilinearity of $\Cw$, in general for $\omega_1,\omega_2\in\cH_f^{(0)}$ we see that
 $$u^n\Cw(\omega_1,\omega_2)=(-1)^n\big(u^n\Cw(\omega_2,\omega_1)\big)^\star.$$

\begin{lemma}\label{lem55}
  For $\omega_i\in\cH_f^{(0)}$ let $\CCw(\omega_1,\omega_2):=\sum_{k=0}^{\infty}\Cw^{k}(\omega_1,\omega_2)u^{n+k}$. Then
  $$\CCw(\bbco\omega_1,\omega_2)-\CCw(\omega_1,\bbco\omega_2)=\frac{d}{du}\CCw(\omega_1,\omega_2)\text{ where }\bbco=\frac{d}{du}+\frac{f}{u^2}.$$
  \end{lemma}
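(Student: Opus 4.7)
The strategy is to use the $\C\uu$-sesquilinearity of $\Cw = \sH_f$ to reduce the flatness identity to a single commutator identity on the operators $b_{k, f}$, which is then verified directly.

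By $\C\uu$-sesquilinearity (and $\C\uuu$-linearity on $\cH_f^{(0)}$), it suffices to verify the identity for monomial inputs $\omega_1 = u^a h\, d\bfx$ and $\omega_2 = u^b g\, d\bfx$ with $a, b \in \Z_{\geq 0}$ and $h, g \in Q_\fm$. Using $\bbco(u^a h\, d\bfx) = a u^{a-1} h\, d\bfx + u^{a-2} fh\, d\bfx$ together with the formula $\CCw(h\, d\bfx, g\, d\bfx) = \sum_{i \geq 0} (-1)^n \Gres\bigl(b_{i, f}(h) g\, d\bfx\bigr) u^{n+i}$ from \eqref{newres1}, the desired identity becomes, after sesquilinear expansion and collecting powers of $u$,
\[
\CCw(f h\, d\bfx, g\, d\bfx) - \CCw(h\, d\bfx, f g\, d\bfx) = u^2 \partial_u \CCw(h\, d\bfx, g\, d\bfx).
\]
Matching coefficients of $u^{n+i}$ on both sides, the lemma reduces to the single algebraic identity
\begin{equation}\label{opidplan}
b_{k+1, f}(fh) - f \cdot b_{k+1, f}(h) = (k + n) \cdot b_{k, f}(h), \qquad k \geq 0,\ h \in Q_\fm,
\end{equation}
with the trivial base case $b_{0, f}(fh) = f \cdot b_{0, f}(h)$ following from $b_{0, f}(h) = (-1)^n h/(f_1 \cdots f_n)$.

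To prove \eqref{opidplan}, use \eqref{important2} to write $b_{k, f}(h) = (-1)^n \sum_{|a| = k} A_1^{a_1} B_1 \cdots A_n^{a_n} B_n(h)$, where $A_j := \frac{1}{f_j}\partial_{x_j}$ and $B_j$ is multiplication by $\frac{1}{f_j}$. The key observation is that $[B_j, f] = 0$, while $[A_j, f] = 1$ (immediate from $A_j(fh) = \frac{1}{f_j}(f_j h + f \partial_{x_j} h) = h + f A_j(h)$); hence by the Leibniz rule $[A_j^{a_j}, f] = a_j A_j^{a_j - 1}$. Consequently,
\[
[A_1^{a_1} B_1 \cdots A_n^{a_n} B_n,\, f] = \sum_{j = 1}^{n} a_j A_1^{a_1} B_1 \cdots A_{j-1}^{a_{j-1}} B_{j-1} A_j^{a_j - 1} B_j A_{j+1}^{a_{j+1}} \cdots A_n^{a_n} B_n.
\]
Summing over $|a| = k + 1$ and, for each $j$, reindexing $a_j \mapsto a_j - 1$, each of the $n$ positions contributes $\sum_{|a'| = k}(a_j' + 1)(A_1^{a_1'} B_1 \cdots A_n^{a_n'} B_n)$, and summing over $j$ telescopes to $\sum_{|a'| = k}(|a'| + n) A_1^{a_1'} B_1 \cdots A_n^{a_n'} B_n = (k+n)(-1)^n b_{k, f}$, which yields \eqref{opidplan}.

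The main technical obstacle is the combinatorial bookkeeping in the reindexing step of the commutator computation; the clean emergence of the universal factor $(k + n)$ across all multi-indices $|a'| = k$ is precisely what is needed to balance the factor $(n + k)$ coming from differentiating $u^{n+k}$ on the right-hand side of the lemma.
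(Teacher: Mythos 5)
Your proposal is correct, and its overall architecture coincides with the paper's: both use the $\C\uu$-sesquilinearity of $\Cw$ to reduce the flatness identity to the coefficient identities $\Cw^{0}(fh\,d\bfx,g\,d\bfx)-\Cw^{0}(h\,d\bfx,fg\,d\bfx)=0$ and $\Cw^{k+1}(fh\,d\bfx,g\,d\bfx)-\Cw^{k+1}(h\,d\bfx,fg\,d\bfx)=(k+n)\Cw^{k}(h\,d\bfx,g\,d\bfx)$, which in turn rest on the operator identity $b_{k+1,f}(fh)-f\,b_{k+1,f}(h)=(k+n)\,b_{k,f}(h)$, i.e.\ \eqref{impo3}. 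Where you genuinely diverge is in the proof of that identity. The paper argues indirectly: it asserts, from the Leibniz rule and $\frac{\partial}{f_i\partial x_i}f=1$, that the left-hand side equals $\mathsf{C}\cdot b_{k,f}(h)$ for a single nonnegative integer $\mathsf{C}$, and then pins down $\mathsf{C}=k+n$ by counting the $\binom{n+k}{k+1}$ terms of $b_{k+1,f}$ against the $\binom{n+k-1}{k}$ terms of $b_{k,f}$. You instead compute the commutator of the composite operator $(\tfrac{\partial}{f_1\partial x_1})^{a_1}\tfrac{1}{f_1}\cdots(\tfrac{\partial}{f_n\partial x_n})^{a_n}\tfrac{1}{f_n}$ with multiplication by $f$ directly from $[\tfrac{\partial}{f_j\partial x_j},f]=1$ and reindex the sum over multi-indices so that each term with $|a'|=k$ acquires the total coefficient $\sum_j(a_j'+1)=k+n$. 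Your route is slightly longer but more self-contained: it proves, rather than asserts, that every term of $b_{k,f}$ appears with the same multiplicity $\mathsf{C}$, which is the one point the paper leaves implicit. A cosmetic difference is that you work with the unsymmetrized expression for $\Cw$ from \eqref{newres1}, whereas the paper uses the symmetrized definition \eqref{impo2} and therefore also records the companion identity for $b_{k,-f}$; since the two expressions agree on cohomology classes, either is acceptable, though if one insists on \eqref{impo2} your commutator argument should be repeated verbatim with $f$ replaced by $-f$.
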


\begin{proof}
 Let $\omega_1=\sum_{i=0}^{\infty}\omega_{1,i}u^i$ and $\omega_2=\sum_{j=0}^{\infty}\omega_{2,j}u^j$ where $\omega_{1,i},\omega_{2,j}\in\Omega^{n}_{Q/\C}$. By the $\mathbb{C}\uu$-sesquilinearity, we observe that
\begin{multline}
  \frac{d}{du}\sum_{k=0}^{\infty}\Cw^{k}(\sum_{i=0}^{\infty}\omega_{1,i}u^i,\sum_{j=0}^{\infty}\omega_{2,j}u^j)u^{k+n}
  =\frac{d}{du}\sum_{l=0}^{\infty}\sum_{l=i+j+k}(-1)^j\Cw^{k}(\omega_{1,i},\omega_{2,j})u^{l+n}\\
  =\sum_{l=0}^{\infty}\sum_{l=i+j+k}(-1)^j\big(\Cw^{k}(i\omega_{1,i},\omega_{2,j})+\Cw^{k}(\omega_{1,i},j\omega_{2,j})\big)u^{l+n-1}\\
  +\sum_{l=0}^{\infty}\sum_{l=i+j+k}(-1)^j(k+n)\Cw^{k}(\omega_{1,i},\omega_{2,j})u^{l+n-1}.
\end{multline}
In particular, we see that 
\begin{multline}
  \sum_{l=0}^{\infty}\sum_{l=i+j+k}(-1)^j\big(\Cw^{k}(i\omega_{1,i},\omega_{2,j})+\Cw^{k}(\omega_{1,i},j\omega_{2,j})\big)u^{l+n-1}\\
  =\sum_{k=0}^{\infty}\big(\Cw^{k}(\sum_{i=1}^{\infty}i\omega_{1,i}u^{i-1},\sum_{j=0}^{\infty}\omega_{2,j}u^j)
  -\Cw^{k}(\sum_{i=0}^{\infty}\omega_{1,i}u^i,\sum_{j=1}^{\infty}j\omega_{2,j}u^{j-1})\big)u^{k+n}\\
  =\sum_{k=0}^{\infty}\Cw^{k}(\frac{d}{du}\sum_{i=0}^{\infty}\omega_{1,i}u^i,\sum_{j=0}^{\infty}\omega_{2,j}u^j)u^{k+n}
  -\sum_{k=0}^{\infty}\Cw^{k}(\sum_{i=0}^{\infty}\omega_{1,i}u^i,\frac{d}{du}\sum_{j=0}^{\infty}\omega_{2,j}u^j)u^{k+n}.
\end{multline}
On the other hand, we see that
\begin{multline}
\sum_{k=0}^{\infty}\Cw^{k}(\sum_{i=0}^{\infty}f\omega_{1,i}u^{i-2},\sum_{j=0}^{\infty}\omega_{2,j}u^j)u^{k+n}
-\sum_{k=0}^{\infty}\Cw^{k}(\sum_{i=0}^{\infty}\omega_{1,i}u^{i},\sum_{j=0}^{\infty}f\omega_{2,j}u^{j-2})u^{k+n}\\
=\sum_{l=0}^{\infty}\sum_{l=i+j+k}(-1)^j\big(\Cw^{k}(f\omega_{1,i},\omega_{2,j})-\Cw^{k}(\omega_{1,i},f\omega_{2,j})\big)u^{l+n-2}.
\end{multline}
Hence, in order to show
 $$\CCw(\bbco\omega_1,\omega_2)-\CCw(\omega_1,\bbco\omega_2)=\frac{d}{du}\CCw(\omega_1,\omega_2)\text{ where }\bbco=\frac{d}{du}+\frac{f}{u^2},$$
it suffices to show that
\begin{align}
  \Cw^{0}(f\omega_{1,0},\omega_{2,0})-\Cw^{0}(\omega_{1,0},f\omega_{2,0})&=0\label{aimpo1}\\
  \Cw^{k+1}(f\omega_{1,i},\omega_{2,j})-\Cw^{k+1}(\omega_{1,i},f\omega_{2,j})&=(k+n)\Cw^{k}(\omega_{1,i},\omega_{2,j})\text{ for }k\geq0\label{aimpo2}.
 \end{align}

\eqref{aimpo1} is clear from the definition of $\Cw^{0}$ in  \eqref{impo2} and \eqref{impotant}. Note that by some calculation using \eqref{impotant} and \eqref{impo2}, we easily see that
$$\Cw^{1}(f\omega_{1,i},\omega_{2,j})-\Cw^{1}(\omega_{1,i},f\omega_{2,j})=n\Cw^{0}(\omega_{1,i},\omega_{2,j}).$$
For general $k\geq 1$ for \eqref{aimpo2}, applying the Leibniz rule to the formula of $b_{k+1,f}$ in  \eqref{important2} and $\frac{\partial}{f_i\partial x_i}f=1$, we see that there is a nonnegative integer $\mathsf{C}$ satisfying
$$b_{k+1,f}(f\omega_{1,i})-fb_{k+1,f}(\omega_{1,i})=\mathsf{C}\cdot b_{k,f}(\omega_{1.i}).$$
Since the number of terms in $b_{k+1,f}$ in the summation is the number of solutions $(a_1,\dots,a_n)$ of $a_1+\cdots+a_n=k+1$ with non-negative integers $a_i$, it is $n+k \choose k+1$ by combination by repetition. Thus, using  $\frac{\partial}{f_i\partial x_i}f=1$ and the Leibniz rule again, we deduce that $\mathsf{C}$ must satisfy
$$(k+1){n+k \choose k+1}=\mathsf{C}\cdot{n+k-1\choose k}.$$
With a similar observation for $b_{k+1,-f}$ and $b_{k,-f}$, we conclude that
\begin{equation}\label{impo3}
  \begin{aligned}
  b_{k+1,f}(f\omega_{1,i})-fb_{k+1,f}(\omega_{1,i})&=(k+n)b_{k,f}(\omega_{1.i})\\
  fb_{k+1,-f}(\omega_{2,j})-b_{k+1,-f}(f\omega_{2,j})&=(k+n)b_{k,-f}(\omega_{2.j}).
  \end{aligned}
\end{equation}
Clearly, \eqref{aimpo2} follows from \eqref{impo3} and \eqref{impo2}.
\end{proof}

Let $\eta_1,\dots,\eta_\mu$ be a basis of
$$Q/(f_1,\dots,f_n)=\mathbb{C}[x_1,\dots,x_n]/(f_1,\dots,f_n).$$
Consider
\begin{equation}
  \begin{aligned}
  \cH_{f,\eta}^{(0)}&:=\rH_n(\Omega^{\bullet}_{Q/\C}[z_1,\dots,z_\mu]\uuu, -d{f}-\sum_{i=1}^{\mu}z_id\eta_i +ud)\\
  \cH_{-f,-\eta}^{(0)}&:=\rH_n(\Omega^{\bullet}_{Q/\C}[ z_1,\dots,z_\mu]\uuu, d{f}+\sum_{i=1}^{\mu}z_id\eta_i+ud).
  \end{aligned}
\end{equation}
Let $F:=f(x_1,\dots,x_n)+\sum_{i=1}^{\mu}z_i\eta_i(x_1,\dots,x_n)$. For $h,g\in\mathbb{C}[x_1,\dots,x_n][z_1,\dots,z_\mu]$, by replacing $f$ with $F$ in Equation~\eqref{hres1}, the extended higher residue  pairings on $\cH_{f,\eta}^{(0)} \times \cH_{f,\eta}^{(0)}$ is given by 
\begin{equation}\label{imfor57}
  \begin{aligned}
  \Cwn(hd\mathbf{x},gd\mathbf{x}):&=\frac{1}{2}\sum_{k=0}^{\infty}\Big((-1)^n\res\big(b_{k,F}(h)g\alpha_1\cdots\alpha_ndx_1\cdots dx_n\big)\\
  &+\res\big(b_{k,-F}(g)h\alpha_1\cdots\alpha_ndx_1\cdots dx_n\big)\Big) u^{k}.
  \end{aligned}
  \end{equation}
We also let
$$\Cwn^{k}(hd\mathbf{x},gd\mathbf{x}):=\frac{1}{2}\Big((-1)^n\Gres(b_{k,F}(h)gd\mathbf{x})+\Gres(b_{k,-F}(g)hd\mathbf{x})\Big)$$
and we note that $\Cwn^k$ is $\C[z_1,\dots,z_\nu]$-valued. The formula for $b_{k,F}$ is given as follows:
\begin{multline}\label{important3}
  b_{k,F}(h)=(-1)^n\sum_{(a_1,\dots,a_n)}(\frac{\partial}{F_1\partial x_1})^{a_1}\frac{1}{F_1}\cdots(\frac{\partial}{F_k\partial x_k})^{a_k}\frac{1}{F_k}\cdots(\frac{\partial}{F_n\partial x_n})^{a_n}\frac{h}{F_n}
\end{multline}
where $F_i:=\frac{\partial F}{\partial x_i}$ and the sum is over all $(a_1,\dots,a_n)$ such that $a_1+\cdots+a_n=k$ with non-negative integers $a_i$.

For $\omega_1,\omega_2\in\cH_{f,\eta}^{(0)}$ letting $\CCwn(\omega_1,\omega_2):=\sum_{k=0}^{\infty}\Cwn^{k}(\omega_1,\omega_2)u^{n+k}$ and replacing $f$ with $F$ in Lemma~\ref{lem55}, we have the followings.
\begin{lemma}\label{imlem55}
  $$\CCwn(\bbcon\omega_1,\omega_2)-\CCwn(\omega_1,\bbcon\omega_2)=\frac{d}{du}\CCwn(\omega_1,\omega_2)\text{ where }\bbcon=\frac{d}{du}+\frac{F}{u^2}.$$
  \end{lemma}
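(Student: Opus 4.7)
The plan is to mirror the proof of Lemma~\ref{lem55} verbatim, substituting $F=f+\sum_{i=1}^{\mu}z_i\eta_i$ for $f$ throughout and treating the deformation parameters $z_i$ as formal constants (so all differentiations with respect to the $x_i$ are unaffected). The only properties of $f$ actually invoked in the proof of Lemma~\ref{lem55} were the explicit formula~\eqref{important2} for $b_{k,f}$, the tautology $\frac{1}{f_i}\frac{\partial f}{\partial x_i}=1$, and a combinatorial count of monomials. Their analogues for $F$ are the formula~\eqref{important3}, the tautology $\frac{1}{F_i}\frac{\partial F}{\partial x_i}=1$, and the identical combinatorial count, so every step of the original argument transports.

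First I would expand $\omega_1=\sum_i\omega_{1,i}u^i$ and $\omega_2=\sum_j\omega_{2,j}u^j$ and use $\C\uu$-sesquilinearity to split $\tfrac{d}{du}\CCwn(\omega_1,\omega_2)$ into a piece that carries the derivative onto the $\omega_\bullet$'s and a residual piece weighted by the factor $(k+n)$ in degree $u^{k+n-1}$. The first piece is exactly $\CCwn(\tfrac{d}{du}\omega_1,\omega_2)-\CCwn(\omega_1,\tfrac{d}{du}\omega_2)$, so to match the residual piece with the contributions of the $F/u^2$-summand in $\bbcon$ it suffices, as in Lemma~\ref{lem55}, to establish the two coefficient identities
\begin{equation}
\Cwn^0(F\omega_{1,0},\omega_{2,0})-\Cwn^0(\omega_{1,0},F\omega_{2,0})=0,
\end{equation}
\begin{equation}
\Cwn^{k+1}(F\omega_{1,i},\omega_{2,j})-\Cwn^{k+1}(\omega_{1,i},F\omega_{2,j})=(k+n)\,\Cwn^{k}(\omega_{1,i},\omega_{2,j})\quad (k\geq 0).
\end{equation}

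The first identity is immediate from $b_{0,F}(h)=(-1)^n h/(F_1\cdots F_n)$. For the second, I would apply the Leibniz rule to $b_{k+1,F}(Fh)$ via~\eqref{important3}: each operator $\frac{1}{F_i}\frac{\partial}{\partial x_i}$ that falls on the distinguished factor of $F$ outputs $1$, while the remaining terms reassemble into $F\cdot b_{k+1,F}(h)$. The number of monomials comprising $b_{k+1,F}$ equals $\binom{n+k}{k+1}$ while that of $b_{k,F}$ equals $\binom{n+k-1}{k}$, and the elementary identity $(k+1)\binom{n+k}{k+1}=(k+n)\binom{n+k-1}{k}$ supplies the multiplier $(k+n)$, yielding the $F$-analogue of~\eqref{impo3}, namely
\begin{equation}
b_{k+1,F}(Fh)-F\,b_{k+1,F}(h)=(k+n)\,b_{k,F}(h),
\end{equation}
together with the companion identity for $b_{k+1,-F}$ with the opposite sign convention. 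Substituting these into the definition of $\Cwn^{k+1}$ produces the required coefficient identity. The main obstacle, if any, is purely bookkeeping around the parameters $z_i$, but since they are inert under $\partial_{x_i}$, the combinatorial step and the Leibniz manipulation go through mutatis mutandis, completing the argument.
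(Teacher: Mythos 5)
Your proposal is correct and is exactly the paper's argument: the paper proves Lemma~\ref{imlem55} precisely by "replacing $f$ with $F$ in Lemma~\ref{lem55}," and your write-up carries out that substitution in detail, reducing to the same two coefficient identities, the same tautology $\frac{1}{F_i}\frac{\partial F}{\partial x_i}=1$, and the same combinatorial count $(k+1)\binom{n+k}{k+1}=(k+n)\binom{n+k-1}{k}$.
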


\begin{lemma}\label{lem57}
 For $i=1,\dots,\mu$, we have
 \begin{equation}\label{lem57eq}
   \CCwn(\bbcoo\omega_1,\omega_2)+\CCwn(\omega_1,\bbcoo\omega_2)=\frac{\partial}{\partial z_i}\CCwn(\omega_1,\omega_2)\text{ where }
   \bbcoo=\frac{\partial}{\partial z_i}-\frac{\eta_i}{u}.
   \end{equation}
  \end{lemma}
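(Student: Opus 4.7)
The plan is to mirror the proof of Lemma~\ref{lem55}, now with $\bbcoo=\partial_{z_i}-\eta_i/u$ playing the role of $\bbco$. Writing $\omega_1=\sum_{i'\ge 0}\omega_{1,i'}u^{i'}$ and $\omega_2=\sum_{j\ge 0}\omega_{2,j}u^j$ and expanding both sides of \eqref{lem57eq} via $\mathbb{C}\uu$-sesquilinearity, the $\partial_{z_i}$-piece of $\bbcoo$ on the left contributes $\CCwn(\partial_{z_i}\omega_1,\omega_2)+\CCwn(\omega_1,\partial_{z_i}\omega_2)$, cancelling those Leibniz-rule contributions to $\partial_{z_i}\CCwn(\omega_1,\omega_2)$ on the right where the derivative falls on the $\omega$'s. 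Letting $\partial^F_{z_i}$ denote differentiation acting only on the $F$-dependence of $\Cwn^k$, matching coefficients of $u^{n+i'+j+k}$ reduces the lemma to the identity
\begin{equation*}
\Cwn^{k+1}(\omega_{1,i'},\eta_i\omega_{2,j})-\Cwn^{k+1}(\eta_i\omega_{1,i'},\omega_{2,j})=\partial^F_{z_i}\Cwn^k(\omega_{1,i'},\omega_{2,j}),\qquad k\ge 0,
\end{equation*}
together with the trivial base $\Cwn^0(\omega_{1,i'},\eta_i\omega_{2,j})=\Cwn^0(\eta_i\omega_{1,i'},\omega_{2,j})$, which is clear because $b_{0,\pm F}$ is multiplication by $\pm(F_1\cdots F_n)^{-1}$.

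Using the explicit formula \eqref{imfor57}, this further reduces to the pair of operator identities
\begin{align*}
\eta_i\,b_{k+1,F}(h)-b_{k+1,F}(\eta_i h)&=\partial^F_{z_i}\,b_{k,F}(h),\\
b_{k+1,-F}(\eta_i h)-\eta_i\,b_{k+1,-F}(h)&=\partial^F_{z_i}\,b_{k,-F}(h),
\end{align*}
with $h$ treated as $z_i$-independent. These are the $\bbcoo$-analogs of \eqref{impo3}, the scalar $k+n$ being replaced by the derivation $\partial^F_{z_i}$. Packaging the $b_{k,F}$'s via the generating series $\Phi_{1,F}\circ\cdots\circ\Phi_{n,F}(h)=\sum_k b_{k,F}(h)u^k$ from \eqref{eq36}, both identities follow from the single generating-function relation
\begin{equation*}
\eta_i\,\Phi_{1,F}\circ\cdots\circ\Phi_{n,F}(h)-\Phi_{1,F}\circ\cdots\circ\Phi_{n,F}(\eta_i h)=u\,\partial^F_{z_i}\bigl(\Phi_{1,F}\circ\cdots\circ\Phi_{n,F}(h)\bigr),
\end{equation*}
with the $-F$ case handled by the substitution $(F,\eta_i)\mapsto(-F,-\eta_i)$.

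The crux is therefore the single-factor version $\eta_i\Phi_{j,F}(h)-\Phi_{j,F}(\eta_i h)=u\,\partial^F_{z_i}\Phi_{j,F}(h)$. By Lemma~\ref{lemma3} one has $(-F_j+u\partial_{x_j})\Phi_{j,F}(h)=h$; differentiating in $z_i$ and using $\partial F_j/\partial z_i=\partial\eta_i/\partial x_j$ shows that both $\eta_i\Phi_{j,F}(h)-\Phi_{j,F}(\eta_i h)$ and $u\,\partial^F_{z_i}\Phi_{j,F}(h)$ satisfy the same equation $(-F_j+u\partial_{x_j})(\cdot)=u\,(\partial\eta_i/\partial x_j)\,\Phi_{j,F}(h)$, and both vanish at $u=0$; uniqueness of the $u$-power-series inverse of $-F_j+u\partial_{x_j}$ (which is $\Phi_{j,F}$ itself) then forces equality. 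One extends to the full composition by induction on the number of factors, invoking the single-factor identity and the Leibniz rule for $\partial^F_{z_i}$ at each step. The principal bookkeeping obstacle is separating total from partial $z_i$-derivatives in the sesquilinear expansion and matching the $u^{-1}$ factor from $\eta_i/u$ to the shift $k\to k+1$ in the $\Cwn^k$-identity; once the single-factor identity is in hand, the remainder is essentially formal.
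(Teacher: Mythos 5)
Your proposal is correct, and its skeleton coincides with the paper's: both expand by $\C\uu$-sesquilinearity, cancel the $\CCwn(\partial_{z_i}\omega_1,\omega_2)+\CCwn(\omega_1,\partial_{z_i}\omega_2)$ terms against the corresponding part of the Leibniz expansion of $\partial_{z_i}\CCwn(\omega_1,\omega_2)$, and reduce the lemma to the $k=0$ vanishing together with the coefficient identity $\Cwn^{k+1}(hd\mathbf{x},\eta_i gd\mathbf{x})-\Cwn^{k+1}(\eta_i hd\mathbf{x},gd\mathbf{x})=\partial_{z_i}\Cwn^{k}(hd\mathbf{x},gd\mathbf{x})$ (your identity is the paper's \eqref{lem53512} with both sides negated), which in turn rests exactly on the commutator identities \eqref{im514} and \eqref{im555}. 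The one genuine difference is how those identities are verified. The paper differentiates the explicit formula \eqref{important3} term by term, using $\partial F_j/\partial z_i=\partial\eta_i/\partial x_j$ and the Leibniz rule to produce the telescoping sums \eqref{app51}--\eqref{app52}. You instead characterize $\Phi_{j,F}$ via the relation $(-F_j+u\partial_{x_j})\Phi_{j,F}=\mathrm{id}$ from Lemma~\ref{lemma3}, differentiate that defining relation in $z_i$, and conclude by injectivity of $-F_j+u\partial_{x_j}$ on power series in $u$ (its $u^0$-part is multiplication by the unit $-F_j$ in the relevant localization). This is a clean alternative that trades the combinatorial bookkeeping for a uniqueness argument; the only point you should spell out when extending by induction to the $n$-fold composition is the single-factor identity for a $z_i$-dependent argument, namely $\eta_i\Phi_{j,F}(\Theta)-\Phi_{j,F}(\eta_i\Theta)=u\bigl(\partial_{z_i}\Phi_{j,F}(\Theta)-\Phi_{j,F}(\partial_{z_i}\Theta)\bigr)$, since it is this form (not the one with $\Theta$ held $z_i$-independent) that telescopes across the composition.
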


\begin{proof}
  From \eqref{imfor57}, \eqref{important3}, the Leibniz rule, and the sesquilinear extension of $\C\uu$, without loss of generality it suffices to show that for $h,g\in\mathbb{C}[x_1,\dots,x_n]$, 
  \begin{align}
    \Cwn^{0}(\eta_i hd\mathbf{x},gd\mathbf{x})-\Cwn^{0}(hd\mathbf{x},\eta_i gd\mathbf{x})&= 0\\
    \Cwn^{k+1}(\eta_i hd\mathbf{x},gd\mathbf{x})-\Cwn^{k+1}(hd\mathbf{x},\eta_i gd\mathbf{x})&= -\frac{\partial}{\partial z_i}\Cwn^k(hd\mathbf{x},gd\mathbf{x})\text{ for }k\geq0.\label{lem53512}
  \end{align}
  From the formula in \eqref{important3}, we see that
  \begin{multline*}
    \Cwn^{0}(\eta_i hd\mathbf{x},gd\mathbf{x})-\Cwn^{0}(hd\mathbf{x},\eta_i gd\mathbf{x})\\
    = \Gres(\frac{\eta_ihg}{F_1\cdots F_n}d\mathbf{x})-\Gres(\frac{\eta_ihg}{F_1\cdots F_n}d\mathbf{x})=0.
  \end{multline*}
For \eqref{lem53512}, we observe the followings. Using the Leibniz law and 
$$\frac{\partial }{\partial z_i}F_j=\frac{\partial }{\partial x_j}\eta_i,$$
we observe that for $\Theta(x_1,\dots,x_n,z_1,\dots,z_\mu)\in\C[x_1,\dots,x_n][z_1,\dots,z_\mu]$,
\begin{multline}\label{app51}
  \frac{\partial}{\partial z_i}\Big((\frac{\partial}{F_k\partial x_k})^{a_k}
  (\frac{1}{F_{k}}\cdot \Theta)\bigg)\\
  =\eta_i(\frac{\partial}{F_k\partial x_k})^{a_k+1}(\frac{1}{F_{k}}\cdot \Theta)
  -(\frac{\partial}{F_k\partial x_k})^{a_k+1}\eta_i(\frac{1}{F_{k}}\cdot\Theta)
  +(\frac{\partial}{F_k\partial x_k})^{a_k}(\frac{1}{F_{k}}\cdot \frac{\partial\Theta}{\partial z_i}).
\end{multline}
Using \eqref{app51}, we deduce that for $h\in\C[x_1,\dots,x_n]$ and nonnegative integers $a_k$,
\begin{multline}\label{app52}
  \frac{\partial}{\partial z_i}\bigg((\frac{\partial}{F_1\partial x_1})^{a_1}\frac{1}{F_1}\cdots(\frac{\partial}{F_k\partial x_k})^{a_k}\frac{1}{F_k}\cdots(\frac{\partial}{F_n\partial x_n})^{a_n}\frac{h}{F_n}\bigg)\\
  =\sum_{k=1}^{n}(\frac{\partial}{F_1\partial x_1})^{a_1}\frac{1}{F_1}\cdots\eta_i(\frac{\partial}{F_k\partial x_k})^{a_k+1}\frac{1}{F_k}\cdots(\frac{\partial}{F_n\partial x_n})^{a_n}\frac{h}{F_n}\\
  -\sum_{k=1}^{n}(\frac{\partial}{F_1\partial x_1})^{a_1}\frac{1}{F_1}\cdots(\frac{\partial}{F_k\partial x_k})^{a_k+1}\eta_i\frac{1}{F_k}\cdots(\frac{\partial}{F_n\partial x_n})^{a_n}\frac{h}{F_n}.
\end{multline}
Finally, \eqref{app52} and \eqref{important3} show that for $i=1,\dots,\mu$ and $h,g\in\C[x_1,\dots,x_n]$,
  \begin{equation}\label{im514}
      \frac{\partial}{\partial z_i}\big(b_{k,F}(h)\big)=-b_{k+1,F}(\eta_ih)+\eta_i b_{k+1,F}(h).
      \end{equation}
Replacing $F$ with $-F$, a similar observation shows that
\begin{equation}\label{im555}
 \frac{\partial}{\partial z_i}\big(b_{k,-F}(g)\big)=b_{k+1,-F}(\eta_ig)-\eta_i b_{k+1,-F}(g).
\end{equation}
  From \eqref{imfor57}, we see that \eqref{im514} and \eqref{im555}  imply \eqref{lem53512}.
  \end{proof}

From Lemma~\ref{imlem55} and Lemma~\ref{lem57}, we establish the followings.

\begin{theorem}\label{cojecmain}
$u^n\sH_f$ is Saito's higher residue pairings, i.e.,
$$u^n\sH_f=\sK_f.$$ 
  \end{theorem}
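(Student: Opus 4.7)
The plan is to invoke the uniqueness characterization of Saito's higher residue pairings: $\sK_f$ is determined on $\cH_f^{(0)}$ by the five properties listed at the start of Section~\ref{sec4} together with the flatness conditions of Lemma~\ref{imlem55} and Lemma~\ref{lem57} applied to the pairing $\CCwn$ on $\cH_{f,\eta}^{(0)}$ associated with the universal unfolding $F = f + \sum_{i=1}^{\mu} z_i\eta_i$. Hence it suffices to show that $\CCw = u^n \sH_f$, together with its unfolded extension $\CCwn$, satisfies all of these; then uniqueness forces $u^n\sH_f = \sK_f$.

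First I would verify the five formal properties for $\CCw$. Property 1) ($\C\uu$-sesquilinearity) and property 4) (vanishing in $u$-degree below $n$ on $\cH_f^{(0)}$) are built into the definition $\CCw = u^n \Cw$ together with the $\C\uu$-sesquilinearity of $\Cw$ observed just after \eqref{hres1}. Property 5) (the leading-term formula) reduces to $\Cw^0 = \GGres$, which is immediate from $b_{0,f}(h) = (-1)^n h/(f_1 \cdots f_n)$ in \eqref{impotant} combined with \eqref{impo2}. Property 2) (the $(-1)^n$-symmetry with $\star$-twist) follows from the identity $b_{i,-f} = (-1)^{n+i} b_{i,f}$, noted in the paragraph preceding Lemma~\ref{lem55}, which forces each $\Cw^i$ to be $(-1)^i$-symmetric and thus yields $u^n\Cw(\omega_1,\omega_2) = (-1)^n\bigl(u^n \Cw(\omega_2,\omega_1)\bigr)^\star$. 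Finally, property 3) (the Gauss--Manin compatibility along $\bbco$) is precisely the content of Lemma~\ref{lem55}.

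Next, the flatness of the extension $\CCwn$ along the universal unfolding, which is the only remaining input required by the uniqueness theorem, is supplied directly by Lemma~\ref{imlem55} in the $u$-direction and by Lemma~\ref{lem57} in the $z_i$-directions. Since $\CCw$ is recovered from $\CCwn$ by the specialization $z_1 = \cdots = z_\mu = 0$, this exhibits the required flat extension. Combining the verified properties with the uniqueness statement quoted from \cite{tsaito, msai2, shkl} then yields $u^n \sH_f = \sK_f$.

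The main obstacle is precisely property 5), the leading-term normalization: Saito's uniqueness theorem without condition 5) determines the pairing only up to a nonzero scalar in $\C$, and fixing that scalar is the very ambiguity behind Shklyarov's conjecture. What makes the present argument go through cleanly is that the explicit formula \eqref{hres1} gives $\Cw^0 = \GGres$ on the nose, with no rescaling required; this is exactly the ingredient that subsequently enables the resolution of Shklyarov's conjecture in Theorem~\ref{saitomain}.
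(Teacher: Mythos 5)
Your proposal is correct and follows essentially the same route as the paper: the text of Section~\ref{sec4} verifies properties 1)--5) for $u^n\sH_f$ exactly as you do (sesquilinearity and $\Cw^0=\GGres$ by construction, the $\star$-symmetry from $b_{i,-f}=(-1)^{n+i}b_{i,f}$, the $u$-connection compatibility from Lemma~\ref{lem55}), and the proof then consists of invoking the flatness conditions of Lemma~\ref{imlem55} and Lemma~\ref{lem57} together with the uniqueness characterization. Your closing remark about property 5) being the crux that removes the scalar ambiguity matches the paper's own framing of the link to Shklyarov's conjecture.
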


\begin{remark}
One can compare the explicit ones in  \eqref{impo2} from \eqref{impotant}  with a couple of the explicit formulae of Saito's higher residue  pairings found in \cite[Section 2]{ksaito}. We note that there is an investigation about a similar result in terms of polyvector fields instead of differential forms in \cite{Li}.
\end{remark}

\subsection{The conjecture of Shklyarov}\label{sec422}

A work \cite{shkl} by D. Shklyarov sheds light on a relation between the canonical pairing in the dg category of matrix factorizations of an isolated singularity and Saito’s higher residue pairings on the twisted de Rham cohomology associated with the singularity. According to D. Shklyarov \cite{Shk: HRR,shkl}, the periodic cyclic homology $HP(\mathcal{A})$ of any proper $\C$-linear dg category $\mathcal{A}$ has a canonical $\C\uu$-sesquilinear pairing
 $$\sK_\cA:HP_\ast(\cA)\otimes HP_\ast(\cA)\to \C\uu.$$
We remark that $\langle-,-\rangle_{HP(\cA)\otimes HP(\cA^\op)}$ in Section~\ref{defcan} is $\sK_\cA$ after composing an isomorphism $HP(\cA)\to HP(\cA^\op)$; see \cite[Section 2.2]{shkl} for the isomorphism.

Let $Q:=\mathbb{C}[x_1,\dots,x_n]$ and assume that $\fm := (x_1, \dots, x_n)$ is the only critical point of the map $f: \mathbb{A}^n_\C \to \mathbb{A}^1_\C$. The  differential $\Z/2$-graded category $\MF(Q,f)$ of matrix factorizations is proper and there is a natural embedding
$$HN_\ast(\MF(Q,f))\hookrightarrow HP_\ast(\MF(Q,f));$$
see \cite{shkl, HK1} for the definitions of matrix factorizations, the negative cyclic homology, and the periodic cyclic homology. It is known that there are isomorphisms
$$I_f:HN_\ast(\MF(Q,f))\to\cH_{f}^{(0)}\text{ and }I_{-f}:HN_\ast(\MF(Q,-f))\to\cH_{-f}^{(0)};$$
see \cite{shkl} for details. Denoting the restriction of the canonical pairing to the negative cyclic homology $HN_\ast(\MF(Q,f))$ by $\sK_\MF$, in \cite{shkl} D. Shklyarov proves that
$$c_f\cdot u^n\cdot\sK_\MF\big(I_{f}^{-1}(-),I_{f}^{-1}(-)\big)=\sK_f(-,-)$$
and conjectures that $c_f=(-1)^{\frac{n(n+1)}{2}}$. The first proof of the conjecture appears from the work of M. Brown and M. Walker in \cite{BW3}. With a different method, for a global matrix factorization the conjecture is proved by B. Kim; see \cite[Theorem 1.2]{Kim} and also see \cite{HK1} about the investigation about the negative cyclic case and the periodic cyclic case. With different techniques from \cite{BW3,Kim}, we reprove the conjecture of Shklyarov.

\renewcommand\qedsymbol{\textbf{Q.E.D.}}

\begin{theorem}[The conjectrue of Shklyarov]\label{saitomain}
$$(-1)^{\frac{n(n+1)}{2}}\cdot u^n\cdot \sK_\MF\big( I_{f}^{-1}(-),I_{f}^{-1}(-)\big)=\sK_f(-,-).$$
  \end{theorem}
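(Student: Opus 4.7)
The plan is to chain together the two main identifications already proved in the paper: Theorem~\ref{main}, which equates $(-1)^{n(n+1)/2}\sH_f$ with the canonical pairing on $\cH_f^{(0)}\times\cH_f^{(0)}$, and Theorem~\ref{cojecmain}, which equates $u^n\sH_f$ with Saito's higher residue pairing $\sK_f$. Once these two facts are in hand, Shklyarov's conjecture falls out as a one-line juxtaposition of signs, and there is essentially no further computation to perform.

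First I would transport the categorical pairing $\sK_\MF$ on $HN_\ast(\MF(Q,f))$ to $\cH_f^{(0)}$ via the isomorphism $I_f$. As recalled at the opening of Section~\ref{sec422}, after composing with the canonical identification $HP(\cA)\to HP(\cA^{\op})$, the pairing $\sK_\cA$ (and hence its restriction $\sK_\MF$ to the negative cyclic homology) is by definition the pairing $\langle-,-\rangle_{HP(\cA)\otimes HP(\cA^{\op})}$ characterized by equation~\eqref{eqn: char pairing}. Since Theorem~\ref{main} was proved by verifying precisely that characteristic equation for the explicit formula $(-1)^{n(n+1)/2}\sH_f$, one obtains on $\cH_f^{(0)}\times\cH_f^{(0)}$ the identification
\begin{equation*}
\sK_\MF\big(I_f^{-1}(-),I_f^{-1}(-)\big)=(-1)^{n(n+1)/2}\sH_f(-,-).
\end{equation*}

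Multiplying both sides by $u^n$ and applying Theorem~\ref{cojecmain} to rewrite $u^n\sH_f=\sK_f$ then yields
\begin{equation*}
u^n\cdot\sK_\MF\big(I_f^{-1}(-),I_f^{-1}(-)\big)=(-1)^{n(n+1)/2}\cdot\sK_f(-,-),
\end{equation*}
and since $(-1)^{n(n+1)/2}$ squares to $1$, multiplying both sides by this sign produces exactly the formula of Theorem~\ref{saitomain}. The only genuinely nontrivial step is the first identification: one has to confirm that $I_f$ really intertwines $\sK_\MF$ with the pairing determined by~\eqref{eqn: char pairing}, matching all the sign and orientation conventions used in \cite{shkl} for $I_f$, for $\Ch_{HP}(\Delta_{Q_\fm})$, and for the K\"unneth map. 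This compatibility is the same compatibility implicit in the Chern character formula $\ch_{HN}(\Delta_{Q_\fm})=(-1)^{n(n+1)/2}\det(\Delta_j(\partial_i f))\,d\mathbf{x}\otimes d\mathbf{y}$ already used in Section~\ref{33eq}, so once those conventions are accepted the theorem follows formally from Theorems~\ref{main} and~\ref{cojecmain}.
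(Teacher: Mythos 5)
Your argument is correct and is essentially the same as the paper's own proof: the paper likewise invokes the uniqueness of the canonical pairing (via its characteristic equation) to identify $\sK_\MF\big(I_f^{-1}(-),I_f^{-1}(-)\big)$ with $\langle-,-\rangle_{\ccan}$, and then concludes by combining Theorem~\ref{main} with Theorem~\ref{cojecmain}. Your sign bookkeeping, using that $(-1)^{n(n+1)/2}$ squares to $1$, matches the intended conclusion.
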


\begin{proof}
  From the uniqueness of the canonical pairing, we obtain that
  $$\langle -,-\rangle_{\ccan}=\sK_\MF\big( I_{f}^{-1}(-),I_{f}^{-1}(-)\big).$$
  By Theorem~\ref{main} and Theorem~\ref{cojecmain}, the proof of the conjecture is established.
\end{proof}

\section*{Acknowledgments}
We thank the anonymous referee for careful reading of the manuscript and the helpful comments and suggestions.


\begin{thebibliography}{99}
\bibitem{BW}
  M. Brown and M. Walker, 
A Chern-Weil formula for the Chern character of a perfect curved module, J. Noncommut. Geom. \textbf{14} (2020), no. 2, 709-772.
  

\bibitem{BW3}
  \bysame, A proof of a conjecture of Shklyarov, J. Noncommut. Geom. \textbf{16} (2022), no. 4, 1479-1523.


\bibitem{CKK}
  K. Chung, B. Kim, and T. Kim, A Chain-level HKR-type map and a Chern character formula, arxiv:2109.14372.


\bibitem{HRD}
  R.~Hartshorne, Residues and Duality.  Lecture Notes in Math. 20, Springer-Verlag,  Berlin,  1966.


\bibitem{her3}
C. Hertling, Frobenius manifolds and moduli spaces for singularities, vol.
  151, Cambridge University Press, 2002.

\bibitem{He3}
  \bysame, Formes bilin\'eaires et hermitiennes pour des 
   singularit\'es: un aper\c{c}u. In: Singularit\'es (ed. D. Barlet), Institut \'Elie Cartan Nancy {\bf 18}, 2006, 1--17. 

\bibitem{iyen1}
  S. Iyengar, G. Leuschke, A. Leykin, C. Miller, E. Miller, A. Singh, and U. Walther, Twenty-four hours of local cohomology,
  Graduate Studies in Mathematics 87, AMS, 282 p. (2007).


 \bibitem{KR}
   A. Kapustin and L. Rozansky, On the relation between open and closed topological strings. Comm.\ Math.\ Phys.\ 252 (2004), 393--414.


\bibitem{Kim}
B. Kim, Hirzebruch-Riemann-Roch for global matrix factorizations,  arXiv:2106.00435.


\bibitem{HK1}
H. Kim and T. Kim, Canonical pairing and Hirzebruch-Riemann-Roch formula for matrix factorizations, arXiv:2304.11675.


\bibitem{TK}
T. Kim, Globalization of Chern characters and canonical  pairings, Submitted to Proceedings of the Memorial conference; A Tribute to the Life and Work of Professor Bumsig Kim: Categorical and Enumerative Aspects of Mirror Symmetry, September 19-23, 2022, KIAS.




\bibitem{kunz}
E. Kunz, Residues and duality for projective algebraic varieties. With the assistance of and contributions by David A. Cox and Alicia Dickenstein. (English) University Lecture Series 47. Providence, RI: American Mathematical Society (AMS). xiii, 158 p. (2008). 


\bibitem{Li}
C. Li, S. Li, and K. Saito, Primitive forms via polyvector fields, arxiv:math.AG/1311.1659.

\bibitem{mp}
  S. Mizera and A. Pokraka,
  \newblock
  From infinity to four dimensions: higher residue pairings and Feynman integrals, Published 2020, Journal of High Energy Physics(SISSA).


\bibitem{PV: HRR}
A. Polishchuk and A. Vaintrob,  
Chern characters and Hirzebruch-Riemann-Roch formula for matrix factorizations, Duke Mathematical Journal \textbf{161} (2012), no. 10, 1863-1926. 




\bibitem{ksaito}
  K. Saito,  Period Mapping Associated to a Primitive Form, Publ. RIMS. Kyoto Uni. 19 (1983), 1231-1264.

\bibitem{tsaito}
K. Saito and A. Takahashi, From primitive forms to Frobenius manifolds, Proc. Sympos. Pure Math., vol. 78 (2008), pp. 31-48.


  \bibitem{msai2}
  M.  Saito, On the structure of Brieskorn lattices II, Journal of Singularities
volume 18 (2018), 248-271. 



\bibitem{Shk: HRR}
  D. Shklyarov, Hirzebruch-Riemann-Roch-type formula for DG algebras. Proc. Lond. Math. Soc. (3) 106 (2013), no. 1, 1-32. 

\bibitem{shkl}
\bysame, Matrix factorizations and higher residue pairings, Adv. Math. \textbf{292} (2016), 181--209.

\bibitem{tu1}
J. Tu, Categorical Saito theory. I: A comparison result, Adv. Math. \textbf{383} (2021), 32 p.

\bibitem{tu2}
\bysame, Categorical Saito theory. II: Landau-Ginzburg orbifolds, Adv. Math. \textbf{384} (2021), 36 p.

\bibitem{van}
 P. Vanhove, Feynman integrals, toric geometry and mirror symmetry. Elliptic integrals, elliptic functions and modular forms in quantum field theory, 415–458, Texts Monogr. Symbol. Comput., Springer, Cham, 2019.
 
 
\end{thebibliography}
\end{document}